\newcommand{\pf}{\begin{proof}}
\newcommand{\epf}{\end{proof}}
\newcommand{\eq}{\begin{equation}}
\newcommand{\eeq}{\end{equation}}
\newcommand{\eqn}{\begin{equation*}}
\newcommand{\eeqn}{\end{equation*}}
\newtheorem{theorem}[equation]{Theorem}
\newtheorem{prop}[equation]{Proposition}
\newtheorem{lemma}[equation]{Lemma}
\theoremstyle{remark}
\newtheorem{remark}[equation]{Remark}
\theoremstyle{definition}
\newtheorem{definition}[equation]{Definition}
\numberwithin{equation}{section}
\begin{document}

\title{A new method to prove the irreducibility of the Eigenspace Representations for $\mathbb{R}^{n}$ semidirect with finite pseudo-reflection group}

\author{Jingzhe Xu}
\address[Xu]{Department of Mathematics, Hong Kong University of Science and technology,
Clear Water Bay, Kowloon, Hong Kong SAR, China}
\email{jxuad@ust.hk}
\abstract{We show that the Eigenspace Representations for $\mathbb{R}^{n}$ semidirect with a finite pseudo-reflection
group $K$, which satisfy some generic property are equivalent to the induced representations from
$\mathbb{R}^{n}$ to $\mathbb{R}^{n} \rtimes K$, which satisfy the same property by Mackey little group method.And the proof of the equivalence is by using
matrix coefficients and invariant theory.As a consequence, these eigenspace representations are irreducible. }
\endabstract

\keywords{Eigenspace Representations, finite pseudo-reflection group, Mackey little group method, matrix coefficient, invariant theory}
\subjclass[2000]{22E46, 22E47}
%
\maketitle     
%
\section{Introduction}
Eigenspace Representations for Riemannian symmetric spaces play a crucial rule in the research both of Group representations and
Geometric Analysis. Many mathematicians have a great interest in the conditions for which the eigenspace representations are
irreducible. In fact, Robert Steinberg[S1] has made a great contribution to it, by using purely invariant theory of finite reflection groups. However, our approach  is by using both of Mackey little group method and invariant theory. What's more, the main part of our proof is just accourding to a monomorphism. Therefore, it makes the proof much simpler. Also, Sigurdur Helgason is another master, who has classified the irreducibility about the eigenspace representations
with respect to both of the compact type and the noncompact type [H1] of the symmetric space $G/K$. What's more, when $G=\mathbb{R}^{n} \rtimes O(n)$, $K=O(n)$, the eigenspace about $G/K\cong \mathbb{R}^{n}$ becomes $\varepsilon_{\lambda}(\mathbb{R}^{n})=\{f\in \varepsilon(\mathbb{R}^{n})\mid Lf=-\lambda^{2}f\}$, where
$L$ denotes the ususal Laplacian on $\mathbb{R}^{n}$. And Helgason also proved the natural action of $G$ on $\varepsilon_{\lambda}(\mathbb{R}^{n})$ is
irreducible if and only if $\lambda\neq 0$[H2]. In this paper, we prove a more general result. We find a generic property under which the eigenspace
representations for $G=\mathbb{R}^{n} \rtimes K$, where $K$ is a finite pseudo-reflection group are irreducible.

\begin{definition}
finite pseudo-reflection groups

   if $K$ is a finite subgroup of $GL(\mathbb{R}^{n})$ and $M\in K$, then $M$ is called a pseudo-reflection if precisely one eigenvalue of $M$
is not equal to one. We call $K$ is a finite pseudo-reflection group if $K$ is a finite subgroup of $O(n)$, which is generated by pseudo-reflections.
\end{definition}
    The main idea of the proof is as follows: On one hand, we use Mackey little group method to get induced representations of $G$, which are irreducible.
On the other hand, for any such representation $Ind_{\mathbb{R}^{n}}^{\mathbb{R}^{n} \rtimes K}\chi$, which satisfies the generic property, we use matrix coefficient method to
get a monomorphism from it to some eigenspace representation$(T_{\lambda},E_{\lambda})$, which satisfies the same property. The amazing part is we know
dim$Ind_{\mathbb{R}^{n}}^{\mathbb{R}^{n} \rtimes K}\chi$=$\mid K\mid$ and will prove dim$E_{\lambda}\leq \mid K\mid$ by using invariant theory. Therefore, $Ind_{\mathbb{R}^{n}}^{\mathbb{R}^{n} \rtimes K}\chi$
is equivalent to $(T_{\lambda},E_{\lambda})$ and $(T_{\lambda},E_{\lambda})$ is irreducible.

   This paper is as follows. In Sect.2, we give the definitions of semidirect product of $G=\mathbb{R}^{n} \rtimes K$, eigenspace representations of $G$ and the generic property.
What's more, we recall the basic knowledge of Mackey litte group method. In Sect.3,we show there is a monomorphism from the unitary irreducible induced representations of $G$ to some eigenspace
representations both of which satisfy the generic property. In Sect.4, we show the dimension of any eigenspace of $G=\mathbb{R}^{n} \rtimes K$ is smaller or equal to the order of
$K$ by using invariant theory. Thus,the eigenspace representations, which satisfy the generic property are irreducible. In Sect.5, we illustrate the circumstances for a particular example when $K$ is equal to the Dihendral group $D_n$.

%
\section{Notation and Preliminaries}
\begin{definition}
semidirect product

Let $K$ be a finite pseudo-reflection group,

Define
\begin{equation*}
\varphi:K\rightarrow Aut(\mathbb{R}^{n})
\end{equation*}
\begin{equation*}
A\mapsto A
\end{equation*}
Then

\begin{equation*}
\begin{split}
&(x_1,k_1)\rtimes (x_2,k_2)=(x_1+k_1\cdot x_2,k_1\cdot k_2)\\
&(x,k)^{-1}=(-k^{-1}\cdot x,k^{-1}).
\end{split}
\end{equation*}
Under this definition,we have the identification
\begin{equation*}
\mathbb{R}^{n} \rtimes K/K\cong \mathbb{R}^{n}
\end{equation*}
\end{definition}

\begin{definition}
Eigenspace Representations

Let $G=\mathbb{R}^{n} \rtimes K$, where $K$ is a finite pseudo-reflection group. $D(G/K)$ is the algebra of all differential operators on $G/K$
which are invariant under $K$. For each homomorphism \ $\chi:D(G/K)\rightarrow \mathbb{C}$, consider the joint eigenspace
\begin{equation*}
E_{\chi}=\{f\in C^{\infty}(G/K)\mid Df=\chi(D)f \  \forall D\in D(G/K)\}
\end{equation*}
Let $T_{\chi}$ denote the natural representation of $G$ on this eigenspace,
\begin{equation*}
(T_{\chi}(g)f)(x)=f(g^{-1}\cdot x)
\end{equation*}
for $g\in G, f\in E_{\chi},x\in G/K. $
\begin{lemma}\label{twist}
Fourier transform gives a $K$-equivalent from the space $D(\mathbb{C}^{n})$ of constant coefficient differential operators on $\mathbb{R}^{n}$
onto the space $\mathbb{C}[x_1,\cdots,x_n]$. In particular, it gives an isomorphism of $D(G/K)$ onto $\mathbb{C}[x_1,\cdots,x_n]^{K}$.
\end{lemma}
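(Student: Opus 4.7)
The plan is to use the classical symbol map furnished by the Fourier transform, track the action of $K \subset O(n)$ through it, and then identify $D(G/K)$ with the $K$-invariant constant coefficient operators by exploiting that $G$-invariance on $G/K \cong \mathbb{R}^n$ decomposes as translation-invariance plus $K$-invariance.

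First I would recall the basic identity $\widehat{\partial_j f}(\xi) = i\xi_j \hat f(\xi)$. Extending by linearity and induction, any constant coefficient differential operator $D = \sum_\alpha c_\alpha \partial^\alpha$ acquires a symbol $\sigma(D)(\xi) = \sum_\alpha c_\alpha (i\xi)^\alpha$, and $\widehat{Df} = \sigma(D)\,\hat f$. Since constant coefficient operators commute, composition corresponds to multiplication of symbols, so $\sigma$ is an algebra homomorphism $D(\mathbb{C}^n)\to \mathbb{C}[x_1,\ldots,x_n]$. It is a bijection because every polynomial $p$ is visibly the symbol of the operator $p(-i\partial)$.

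Second, for the $K$-equivariance: elements of $K \subseteq O(n)$ preserve Lebesgue measure on $\mathbb{R}^n$, so the Fourier transform satisfies $\widehat{k \cdot f} = k \cdot \hat f$, where $(k \cdot f)(x) = f(k^{-1}x)$. Using the induced actions $(k\cdot D)(f)=k\cdot(D(k^{-1}\cdot f))$ on operators and $(k\cdot p)(\xi)=p(k^{-1}\xi)$ on polynomials, a direct substitution gives $\sigma(k\cdot D) = k\cdot \sigma(D)$. Therefore $\sigma$ is $K$-equivariant and restricts to an isomorphism $D(\mathbb{C}^n)^K \xrightarrow{\sim} \mathbb{C}[x_1,\ldots,x_n]^K$.

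Third, I would identify $D(G/K)$ with $D(\mathbb{C}^n)^K$. Under the diffeomorphism $G/K \cong \mathbb{R}^n$, the $G = \mathbb{R}^n \rtimes K$-action becomes translations together with the linear $K$-action. The differential operators on $\mathbb{R}^n$ commuting with all translations are precisely those with constant coefficients, so imposing additionally invariance under $K$ (which together with translation-invariance amounts to $G$-invariance) yields $D(G/K)\cong D(\mathbb{C}^n)^K$. Combined with the previous step this gives $D(G/K)\cong \mathbb{C}[x_1,\ldots,x_n]^K$. The main obstacle is not substantive: the only care required is to fix the sign and factor-of-$i$ conventions consistently, and to recognize that invariance under the semidirect product decomposes as invariance under both factors. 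All the content is packaged in the classical symbol isomorphism together with orthogonal invariance of the Fourier transform.
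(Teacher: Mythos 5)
Your proof is correct. The paper itself gives no argument for this lemma --- it is quoted from the reference [J] (Wolf, \emph{Spherical functions on Euclidean space}) --- so there is no internal proof to compare against; your symbol-map argument ($\widehat{Df}=\sigma(D)\hat f$, equivariance of the Fourier transform under $O(n)$ because orthogonal maps preserve Lebesgue measure and satisfy $k^{T}=k^{-1}$, and the identification of translation-invariant operators with constant-coefficient ones) is exactly the standard route and is complete. One small but worthwhile observation: the paper's Definition of $D(G/K)$ literally says ``invariant under $K$,'' which cannot be the intended meaning (the algebra of all $K$-invariant differential operators on $\mathbb{R}^{n}$ is far larger than $\mathbb{C}[x_1,\ldots,x_n]^{K}$); your third step silently and correctly reads it as $G$-invariance, decomposed as translation-invariance plus $K$-invariance, which is what the lemma's conclusion requires.
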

\begin{lemma}\label{twist}
The homomorphisms of $\mathbb{C}[x_1,\cdots,x_n]^{K}$ into $\mathbb{C}$ are precisely $\chi_{\lambda}:P\mapsto P(\lambda)$,
where $\lambda$ is some element in $\mathbb{C}^{n}$.
\end{lemma}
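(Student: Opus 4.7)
The plan is to reduce the statement to the Chevalley--Shephard--Todd theorem for finite pseudo-reflection groups, followed by a short application of Hilbert's Nullstellensatz. Concretely, since $K$ is a finite pseudo-reflection group on $\bbR^n$ (and therefore on $\bbC^n$ after complexification), Chevalley--Shephard--Todd asserts that $\bbC[x_1,\ldots,x_n]^K = \bbC[f_1,\ldots,f_n]$ for some algebraically independent homogeneous invariants $f_1,\ldots,f_n$, and moreover $\bbC[x_1,\ldots,x_n]$ is a free $\bbC[f_1,\ldots,f_n]$-module of rank $|K|$. This structural input does all the heavy lifting.

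Given a $\bbC$-algebra homomorphism $\chi:\bbC[x_1,\ldots,x_n]^K\to\bbC$, set $c_i=\chi(f_i)$. Because $f_1,\ldots,f_n$ are free polynomial generators, $\chi$ is completely determined by the tuple $(c_1,\ldots,c_n)\in\bbC^n$. Conversely, the evaluation homomorphism $\chi_\lambda:P\mapsto P(\lambda)$ corresponds to the tuple $(f_1(\lambda),\ldots,f_n(\lambda))$. So the lemma reduces to showing that the polynomial map
\begin{equation*}
F=(f_1,\ldots,f_n):\bbC^n\lra\bbC^n
\end{equation*}
is surjective, i.e.\ for any prescribed $(c_1,\ldots,c_n)\in\bbC^n$ there exists $\lambda\in\bbC^n$ with $f_i(\lambda)=c_i$ for all $i$.

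To establish surjectivity, I would consider the ideal $I=(f_1-c_1,\ldots,f_n-c_n)\subset\bbC[x_1,\ldots,x_n]$ and show it is a proper ideal. The quotient $\bbC[x_1,\ldots,x_n]/I$ is obtained from the free $\bbC[f_1,\ldots,f_n]$-module $\bbC[x_1,\ldots,x_n]$ of rank $|K|$ by base-changing along the quotient $\bbC[f_1,\ldots,f_n]\to\bbC[f_1,\ldots,f_n]/(f_i-c_i)\cong\bbC$. Hence $\bbC[x_1,\ldots,x_n]/I$ has dimension exactly $|K|\geq 1$ over $\bbC$, and in particular $I\neq\bbC[x_1,\ldots,x_n]$. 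Hilbert's Nullstellensatz then supplies a common zero $\lambda\in\bbC^n$ of $I$; by construction $\chi=\chi_\lambda$, completing the argument.

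The main obstacle, and the real content, is the appeal to Chevalley--Shephard--Todd: once the invariant ring is known to be a free polynomial algebra and $\bbC[x_1,\ldots,x_n]$ is free of rank $|K|$ over it, everything else is a one-line Nullstellensatz argument. (The rank $|K|$ statement is also precisely what is needed in the later dimension bound $\dim E_\lambda\leq |K|$ promised in the introduction, so it is natural to import it here.) Note that the parametrization is not injective: two points $\lambda,\lambda'$ give the same $\chi_\lambda$ exactly when they lie in the same $K$-orbit, but this ambiguity is not part of what must be proved.
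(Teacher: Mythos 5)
Your proof is correct, but there is nothing in the paper to compare it against: the paper states this lemma without proof, citing it as a result from Helgason [H3]. Your route --- use Chevalley--Shephard--Todd to write $\mathbb{C}[x_1,\ldots,x_n]^K=\mathbb{C}[f_1,\ldots,f_n]$, identify homomorphisms into $\mathbb{C}$ with tuples $(c_1,\ldots,c_n)$, and prove surjectivity of $(f_1,\ldots,f_n):\mathbb{C}^n\to\mathbb{C}^n$ by showing the ideal $(f_1-c_1,\ldots,f_n-c_n)$ is proper (its quotient is the rank-$|K|$ free module reduced modulo a maximal ideal of the invariant ring) and applying the weak Nullstellensatz --- is complete and sound, and it dovetails nicely with the freeness and rank-$|K|$ facts the paper invokes later in Section 4. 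It is, however, heavier than necessary: for an arbitrary finite subgroup $K\subset GL(n,\mathbb{C})$ the polynomial ring is integral over the invariant ring (every $p$ is a root of $\prod_{g\in K}(T-g\cdot p)$, whose coefficients are invariant), so the kernel of a unital homomorphism $\chi$, being a maximal ideal of $\mathbb{C}[x_1,\ldots,x_n]^K$, is the contraction of a maximal ideal of $\mathbb{C}[x_1,\ldots,x_n]$ by lying over, and the Nullstellensatz identifies that maximal ideal as $(x_1-\lambda_1,\ldots,x_n-\lambda_n)$ for some $\lambda$, giving $\chi=\chi_\lambda$ directly; this argument needs no pseudo-reflection hypothesis and is essentially the textbook proof being cited. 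One small caveat applying to either version: ``homomorphism'' must be read as unital $\mathbb{C}$-algebra homomorphism (or at least nonzero), since otherwise the zero map is a counterexample; your proof tacitly and correctly assumes $\chi(1)=1$ when it treats $\chi$ as determined by the values $\chi(f_i)$.
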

From Lemma 2.3[J] and Lemma 2.4[H3], we obtain for any $P(\frac{\partial}{\partial x_1},\cdots ,\frac{\partial}{\partial x_n})\in D(G/K)$, where $P(x_{1},\cdots ,x_{n})\in \mathbb{C}[x_1,\cdots,x_n]$ any $\chi:D(G/K)\rightarrow \mathbb{C}$, there exists a unique
\begin{equation*}
\lambda=\left(
          \begin{array}{c}
            \lambda_{1} \\
            \cdot \\
            \cdot \\
            \cdot \\

            \lambda_{n} \\
          \end{array}
        \right)
\end{equation*}
s.t.
\begin{equation*}
\chi(P(\frac{\partial}{\partial x_1},\cdots,\frac{\partial}{\partial x_n}))=P(\lambda_1,\cdots,\lambda_n)
\end{equation*}
Therefore, for any homomorphism $\chi:D(G/K)\rightarrow \mathbb{C}$, we have
\begin{equation*}
E_{\chi}=\{f\in C^{\infty}(G/K)\mid P(\frac{\partial}{\partial x_1},\cdots,\frac{\partial}{\partial x_n})f=P(\lambda_1,\cdots,\lambda_n)f,
\forall P(x_1,\cdots,x_n)\in \mathbb{C}[x_1,\cdots,x_n]^{K}\}
\end{equation*}
For simplicity, we denote $E_\chi=E_\lambda$ from now on.
\end{definition}
\begin{definition}
Generic Property

Let $\chi$ be any character on $\mathbb{R}^{n}$, then there exists
\begin{equation*}
\lambda=\left(
          \begin{array}{c}
            i\lambda_{1} \\
            \cdot \\
            \cdot \\
            \cdot \\

            i\lambda_{n} \\
          \end{array}
        \right)\in \mathbb{C}^{n}
\end{equation*}

where $\lambda_{i}\in \mathbb{R}$,$i=1,\cdots,n$
s.t.
\begin{equation*}
\chi(x)=e^{-\lambda^{T}\cdot x} \ for \ any \
x=\left(
          \begin{array}{c}
            x_{1} \\
            \cdot \\
            \cdot \\
            \cdot \\

            x_{n} \\
          \end{array}
        \right)\in \mathbb{R}^{n}
\end{equation*}
We say $\chi(\ or \ \lambda)$ is generic if for any $g\neq e \ g\in K,g\cdot \lambda\neq \lambda$ holds.

We say $(T_{\lambda},E_{\lambda})$ satisfies the generic property if the same condition holds.
\end{definition}
\smallskip

\textbf{Mackey little group method}

If $\psi$ is an irreducible representation of $G=\mathbb{R}^{n} \rtimes K$, then it can be constructed(up to equivalence)
as follows. If $\chi$ is a character on $\mathbb{R}^{n}$, let $K_{\chi}$denote its $K$-normalizer, so $G_{\chi}=\mathbb{R}^{n} \rtimes K_{\chi}$
is the $G$-normalizer of $\chi$. Write $\widetilde{\chi}$ for the extension of $\chi$ to $G_{\chi}$ given by
$\widetilde{\chi}((x,k))=\chi(x)$; it is a well defined character on $G_{\chi}$. If $\gamma$ is an irreducible unitary representation
of $K_{\chi}$, let $\widetilde{\gamma}$ denote its extension of $G_{\chi}$ given by $\widetilde{\gamma}((x,k))=\gamma(k)$.
Denote $\psi_{\chi,\gamma}=Ind_{G_{\chi}}^{G}(\widetilde{\chi}\bigotimes \widetilde{\gamma})$.
Then there exists choices of $\chi$ and $\gamma$ such that $\psi=\psi_{\chi,\gamma}$.

We say $\psi_{\chi,\gamma}$ satisfies the generic property if $\chi$ is generic.

%
\section{The monomorphism}
In this section, we first introduce some lemmas, and then we use these lemmas to show there exists a monomorphism
from the induced representations to some eigenspace representations both of which satisfy the generic property.
\begin{lemma}\label{twist}
In the notation of Section 2, $\psi_{\chi,\gamma}$ has a $K$-fixed vector if and only if $\gamma$ is the trivial
1-dimensional representation of $K_{\chi}$. In that case, $\psi_{\chi,\gamma}=Ind_{G_{\chi}}^{G}(\widetilde{\chi})$ \ and
the $K$-fixed vector is given(up to scalar multiple)by $u((x,k))=e^{\lambda^{T}\cdot k^{T}\cdot x}$, if $\chi(x)=e^{-\lambda^{T}\cdot x}$.
\end{lemma}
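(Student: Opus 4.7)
The plan is to realize $\psi_{\chi,\gamma}$ concretely as the space of functions $u \colon G \to V_\gamma$ satisfying the right-equivariance $u(gh) = \widetilde{\chi}(h)^{-1}\widetilde{\gamma}(h)^{-1} u(g)$ for all $h \in G_\chi$, with $G$ acting by left translation. In this picture a $K$-fixed vector is exactly a left $K$-invariant such $u$. Writing $g = (x,k) \in \mathbb{R}^n \rtimes K$ and using $(0,k^{-1})(x,k) = (k^{-1}x, e)$, left $K$-invariance forces $u(x,k) = u(k^{-1}x, e)$, so $u$ is determined by the single $V_\gamma$-valued function $f(x) := u(x,e)$.

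Next I would impose the right-equivariance on the pair $g = (x,e)$ and $h = (y,k') \in G_\chi$ with $k' \in K_\chi$ and $y \in \mathbb{R}^n$; the product is $(x+y, k')$, so combined with left $K$-invariance this yields
\begin{equation*}
f\bigl({k'}^{-1}(x+y)\bigr) = \chi(y)^{-1}\gamma(k')^{-1} f(x).
\end{equation*}
Setting $x = 0$ shows that $f$ is determined by $f(0) \in V_\gamma$, via $f({k'}^{-1}y) = \chi(y)^{-1}\gamma(k')^{-1} f(0)$. Comparing the specializations $k' = e$ and a general $k' \in K_\chi$, and using $\chi(k'y) = \chi(y)$ from the very definition of $K_\chi$, I obtain the single linear constraint $\gamma(k') f(0) = f(0)$ for every $k' \in K_\chi$. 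Since $\gamma$ is irreducible, either $f(0) = 0$ (so $u \equiv 0$) or $\gamma$ is the trivial one-dimensional representation of $K_\chi$, which gives the \emph{only if} direction and the identification $\psi_{\chi,\gamma} = \mathrm{Ind}_{G_\chi}^{G}(\widetilde{\chi})$ in that case.

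For the existence half, assuming $\gamma$ trivial and hence $V_\gamma \cong \mathbb{C}$, I would set $u(x,k) := \chi(k^{-1}x)^{-1}$ and verify directly that both the right-equivariance and the left $K$-invariance hold. Using $k \in O(n)$, so that $k^{-1} = k^T$, together with $\chi(x) = e^{-\lambda^T x}$, this rewrites as $u(x,k) = e^{\lambda^T k^T x}$, which is the claimed formula; uniqueness up to scalar falls out of the analysis above since $f(0)$ was the only free parameter.

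I expect the main delicate point to be the consistency step that produces the relation $\gamma(k') f(0) = f(0)$: one has to check that the right-equivariance by the entire group $K_\chi$ (not merely a generating subset) interacts cleanly with left $K$-invariance, and use $\chi(k'y) = \chi(y)$ to cleanly separate the $\chi$-part from the $\gamma$-part so that only $\gamma$ is pinned down. Everything else is routine manipulation within the semidirect-product multiplication law.
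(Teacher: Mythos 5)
Your proposal is correct and follows essentially the same route as the paper: both realize $\psi_{\chi,\gamma}$ as $G_\chi$-equivariant functions on $G$ acted on by left translation, combine left $K$-invariance with the right equivariance to show a fixed vector is determined by its value at the identity and satisfies $\gamma(k')f(0)=f(0)$, invoke irreducibility of $\gamma$, and exhibit $u((x,k))=\chi(k^{-1}x)^{-1}=e^{\lambda^{T}k^{T}x}$ in the trivial case. Your version merely spells out the intermediate functional equation $f(k'^{-1}(x+y))=\chi(y)^{-1}\gamma(k')^{-1}f(x)$ that the paper leaves implicit.
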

\begin{proof}
The representation space $H_{\psi}$ of $\psi=\psi_{\chi,\gamma}$ consists of all $L^{2}$ functions $f:G\rightarrow H_{\gamma}$
such that $f(g^{'}\cdot (x^{'},k^{'}))=\gamma(k^{'})^{-1}\cdot \chi(x^{'})^{-1}\cdot f(g^{'})$\ for $g^{'}\in G,x^{'}\in \mathbb{R}^{n}$
and $k^{'}\in K_{\chi}$, and $\psi$ acts by $(\psi(g)f)(g^{'})=f(g^{-1}\cdot g^{'})$.

Now suppose that $0\neq f\in H_{\psi}$ is fixed under $\psi(k)$. If $k^{'}\in K_{\chi}$, then $\gamma(k^{'})\cdot f(1)=f(1)$.
If $f(1)=0$, then $f(G_{\chi})=0$ and $K$-invariance says $f=0$, contrary to assumption. Thus $f(1)\neq 0$ and irreducibility of
$\gamma$ forces $\gamma$ to be trivial.

Conversely, if $\gamma$ is trivial and $\chi(x)=e^{-\lambda^{T}\cdot x}$, then $f((x,k))=e^{\lambda^{T}\cdot k^{T}\cdot x}$ is
a nonzero $K$-fixed vector in $H_{\psi}$. And it is the only one, up to scalar multiple, because any two $K$-fixed vectors must
be proportional.
\end{proof}
\begin{lemma}\label{twist}
In the notation of Section 2, $Ind_{G_{\chi}}^{G}(\widetilde{\chi})$ is equivalent to the subrepresentation of $Ind_{\mathbb{R}^{n}}^{G}(\chi)$, which is generated by the
$K$-fixed vector $u((x,k))=e^{\lambda^{T}\cdot k^{T}\cdot x}$. Here $\chi(x)=e^{-\lambda^{T}\cdot x}$. In addtion, both of these two representations are
irreducible.
\end{lemma}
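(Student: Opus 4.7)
The plan is to produce an explicit $G$-equivariant embedding of $Ind_{G_\chi}^G(\widetilde{\chi})$ into $Ind_{\mathbb{R}^n}^G(\chi)$ which sends the canonical $K$-fixed vector of the source to $u$, and then to deduce the conclusion directly from the Mackey irreducibility theorem recalled in Section~2.

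The first step is to write down the embedding. Since $\mathbb{R}^n \subset G_\chi$ and $\widetilde{\chi}|_{\mathbb{R}^n} = \chi$, any function $f : G \to \mathbb{C}$ satisfying the stronger $G_\chi$-equivariance $f(gh) = \widetilde{\chi}(h)^{-1} f(g)$ for $h \in G_\chi$ automatically satisfies the weaker condition $f(gy) = \chi(y)^{-1} f(g)$ for $y \in \mathbb{R}^n$. This yields a tautological $G$-equivariant injection
\[
j : Ind_{G_\chi}^G(\widetilde{\chi}) \hookrightarrow Ind_{\mathbb{R}^n}^G(\chi).
\]
By Lemma~3.1, the source has a distinguished $K$-fixed vector given by $u((x,k)) = e^{\lambda^T k^T x}$, and by construction $j$ maps it to the same explicit function now viewed inside the larger induced space.

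Next I would invoke the Mackey description from Section~2: the little-group representation $\psi_{\chi,\mathrm{triv}} = Ind_{G_\chi}^G(\widetilde{\chi})$ is irreducible. Therefore $\mathrm{Im}(j)$ is an irreducible $G$-subrepresentation of $Ind_{\mathbb{R}^n}^G(\chi)$. Because the nonzero vector $u$ lies in $\mathrm{Im}(j)$, the cyclic subrepresentation $\langle G\cdot u \rangle$ is contained in $\mathrm{Im}(j)$, and since any nonzero subrepresentation of an irreducible representation is the whole thing, we conclude $\langle G\cdot u\rangle = \mathrm{Im}(j)$. Hence $j$ restricts to a $G$-equivalence between $Ind_{G_\chi}^G(\widetilde{\chi})$ and the cyclic subrepresentation generated by $u$, and both are irreducible.

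The only nontrivial input is the Mackey irreducibility assertion for $\psi_{\chi,\mathrm{triv}}$, which is already quoted in Section~2; everything else is either a routine check that $j$ is well-defined and $G$-equivariant, or the standard fact that every nonzero vector in an irreducible representation is cyclic, so I do not expect any real obstacle. Note that in the generic case $K_\chi = \{e\}$, so $G_\chi = \mathbb{R}^n$ and $j$ is literally the identity map on functions, and the statement collapses to the assertion that $Ind_{\mathbb{R}^n}^G(\chi)$ is itself irreducible and is generated as a $G$-module by the single vector $u$.
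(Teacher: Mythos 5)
The paper never actually proves this lemma: it is stated without proof and attributed to [J] in Remark 3.3, so there is no internal argument to compare yours against. Taken on its own terms, your proof is correct. The tautological inclusion $j$ is well defined because the $G_{\chi}$-equivariance condition $f(g\cdot(x',k'))=\chi(x')^{-1}f(g)$ implies the $\mathbb{R}^{n}$-equivariance condition upon taking $k'=e$; it is injective and intertwines the two left-translation actions; and it carries the $K$-fixed vector of Lemma 3.1 to the function $u$ sitting inside $Ind_{\mathbb{R}^{n}}^{G}(\chi)$. Combined with irreducibility of the source, the identification of $\mathrm{Im}(j)$ with the cyclic subrepresentation generated by $u$, and hence the irreducibility of both, is immediate. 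The one point worth flagging is that Section 2 of the paper only records the ``exhaustion'' half of Mackey's theorem (every irreducible representation of $G$ arises as some $\psi_{\chi,\gamma}$), not the irreducibility half that you invoke for $\psi_{\chi,\mathrm{triv}}$; you are therefore using a slightly stronger external input than what is literally quoted, though it is a standard part of the little-group method and the paper itself relies on it implicitly (e.g.\ in Remark 3.3 and in the proof of Theorem 3.4, where irreducibility of $V$ is taken for granted). Your closing observation that in the generic case $K_{\chi}=\{e\}$, so that $j$ is the identity and the lemma reduces to irreducibility and cyclicity of $Ind_{\mathbb{R}^{n}}^{G}(\chi)$ itself, is exactly the content of the paper's Remark 3.3.
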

\begin{remark}
If the induced representation satisfies the generic proporty, then $K_{\chi}=\{e\}$. From Lemma 3.1, 3.2[J], we obtain
$\psi_{\chi,\gamma}=Ind_{G_{\chi}}^{G}(\widetilde{\chi})$ is equivalent to $Ind_{\mathbb{R}^{n}}^{G}(\chi)$, which is generated by
the $K$-fixed vector $u((x,k))=e^{\lambda^{T}\cdot k^{T}\cdot x}$.
\end{remark}
\begin{theorem}\label{equal}
Let $(\Phi,V)=Ind_{\mathbb{R}^{n}}^{G}(\chi)$, which satisfies the generic property. Then $(\Phi^{c},V^{*})$ is generated by $u^{*}=(\ ,u)$, where $(f_{1},f_{2})=\int_{G}f_{1}(g)\overline{f_{2}(g)}dg$
for any $f_{1},f_{2}\in V$ and $u((x,k))=e^{\lambda^{T}\cdot k^{T}\cdot x}$ is the $K$ fixed vector in $V$, if $\chi(x)=e^{-\lambda^{T}\cdot x}$.
\end{theorem}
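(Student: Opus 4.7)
The plan is to leverage the irreducibility of $(\Phi,V)$ that was already noted in Lemma 3.2 and Remark 3.3 as a consequence of the generic property, and then use a clean duality argument to transfer this irreducibility to the contragredient representation. A useful structural observation at the outset is that since $\mathbb{R}^n$ is normal in $G$ of finite index $|K|$, the induced space $V=Ind_{\mathbb{R}^{n}}^{G}(\chi)$ is finite dimensional (of dimension $|K|$, with a function $f$ determined by its values on coset representatives $(0,k)$ for $k\in K$). Consequently, $V^{*}$ is also $|K|$-dimensional, and ``generated by $u^{*}$'' can be interpreted purely algebraically: we need to show that $W^{*}:=\Span\{\Phi^{c}(g)u^{*}:g\in G\}$ equals all of $V^{*}$.

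First I would unwind the contragredient action using the pairing: for any $f\in V$ and $g\in G$,
\begin{equation*}
(\Phi^{c}(g)u^{*})(f) \;=\; u^{*}\bigl(\Phi(g^{-1})f\bigr) \;=\; \bigl(\Phi(g^{-1})f,\,u\bigr).
\end{equation*}
Because $\lambda$ is purely imaginary, $\chi$ is a unitary character, so $\Phi$ is unitary with respect to $(\,,\,)$, and thus the right-hand side equals $(f,\Phi(g)u)$. Next I would invoke the standard finite-dimensional duality: $W^{*}=V^{*}$ if and only if the annihilator $(W^{*})^{\perp}\subseteq V$ is trivial. By the preceding calculation, $(W^{*})^{\perp}$ is precisely the set of $f\in V$ orthogonal (in the inner product $(\,,\,)$) to every $\Phi(g)u$, i.e.\ the orthogonal complement of the cyclic $G$-subspace $\Span\{\Phi(g)u:g\in G\}$ generated by $u$ in $V$.

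To finish, I would combine the irreducibility of $(\Phi,V)$ with the nonvanishing of $u$: one sees immediately that $u((0,e))=1\neq 0$, so the cyclic subspace generated by $u$ is a nonzero $G$-invariant subspace of the irreducible representation $V$, hence equals all of $V$. Its orthogonal complement is therefore $\{0\}$, forcing $W^{*}=V^{*}$, which is the desired statement. The main conceptual point is already resolved by Remark 3.3; the only real work is the bookkeeping needed to verify that the inner product $(\,,\,)$ used to define $u^{*}$ is compatible with the unitary structure of $\Phi$ (so that the adjoint step $(\Phi(g^{-1})f,u)=(f,\Phi(g)u)$ is legitimate) and that on $V$ this pairing reduces, via the coset decomposition $G/\mathbb{R}^{n}\cong K$, to a finite sum so that it is well defined. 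Once these routine points are verified, the argument is just the familiar ``irreducible plus nonzero cyclic vector'' principle applied to the dual representation.
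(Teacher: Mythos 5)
Your proposal is correct and rests on the same two pillars as the paper's own proof: the $G$-invariance of the inner product $(f_{1},f_{2})=\int_{G}f_{1}(g)\overline{f_{2}(g)}\,dg$ (so that $(\Phi^{c}(g)u^{*})(f)=(f,\Phi(g)u)$, which is exactly the paper's computation in (3.9)) and the fact from Remark 3.3 that the nonzero vector $u$ generates the irreducible space $V$. The only difference is presentational: you package the conclusion via the annihilator of $\Span\{\Phi^{c}(g)u^{*}\}$, whereas the paper carries out the equivalent argument by writing everything in an explicit orthonormal basis and tracking conjugated coefficients; your phrasing is the cleaner of the two but is not a genuinely different route.
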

\begin{proof}
The representation space $V$ of $Ind_{\mathbb{R}^{n}}^{G}(\chi)$ consists of all $L^{2}$ functions $f:G\rightarrow V$ such that
$f(g^{'}\cdot (x^{'},e))=\chi(x^{'})^{-1}\cdot f(g^{'})$\ for $g^{'}\in G,x^{'}\in \mathbb{R}^{n}$. Therefore,
$(f_{1},f_{2})=\int_{G}f_{1}(g)\overline{f_{2}(g)}dg$ for any $f_{1},f_{2}\in V$ is a inner product, which satisfies
\begin{equation}\label{l-invariant elements}
\begin{split}
&(\Phi(g_{0})f_{1},\Phi(g_{0})f_{2})=\int_{G}\Phi(g_{0})f_{1}(g)\overline{\Phi(g_{0})f_{2}(g)}dg\\
&=\int_{G}f_{1}(g_{0}^{-1}\cdot g)\overline{f_{2}(g_{0}^{-1}\cdot g)}d(g_{0}^{-1}\cdot g)=\int_{G}f_{1}(g)\overline{f_{2}(g)}dg=(f_{1},f_{2}).
\end{split}
\end{equation}
for any $f_{1},f_{2}\in V$, $g_{0}\in G$.
On one hand, let $v_{1},\cdots ,v_{n}$ be an orthonormal basis of $V$. Then denote $v_{i}^{*}=(\ ,v_{i})\in V^{*}$. It's
easy to check $v_{1}^{*},\cdots , v_{n}^{*}$ is a basis of $V^{*}$. Let $E:V\rightarrow V^{*}$ s.t. $v_{i}\mapsto v_{i}^{*}$.
be a linear map. Then $E$ is isomorphic as a linear map. From Remark 3.3, we know $V$ is irreducible and $V$ is generated by $u((x,k))$.
Let $v_{k}=\sum_{i}l_{ki}\Phi(g_{i})u$,$\Phi(g_{i})u=\sum_{j}a_{ij}v_{j}$.
Then for $\forall v\in V$, we have
 \begin{equation}\label{l-invariant elements}
\begin{split}
&v=\sum_{k}m_{k}v_{k}=\sum_{k}m_{k}(\sum_{i}l_{ki}\Phi(g_{i})u)\\
&=\sum_{i}(\sum_{k}m_{k}l_{ki})\Phi(g_{i})u\\
&=\sum_{i}n_{i}\Phi(g_{i})u=\sum_{i}n_{i}(\sum_{j}a_{ij}v_{j}).
\end{split}
\end{equation}
Therefore, the element of $V$ is of the form $\sum_{i}n_{i}(\sum_{j}a_{ij}v_{j})$ \ $\forall n_{i}\in \mathbb{C}$.
Since $E$ is bijective, we have
\begin{equation}\label{l-invariant elements}
\begin{split}
&V^{*}=\{\sum_{i}n_{i}(\sum_{j}a_{ij}v_{j}^{*}),n_{i}\in \mathbb{C}\}\\
&=\{\sum_{i}\overline{n_{i}}(\sum_{j}\overline{a_{ij}}v_{j}^{*}),n_{i}\in \mathbb{C}\}
\end{split}
\end{equation}
On the other hand, if $\Phi(g)u=\sum_{i}k_{i}v_{i}$, we have $\Phi^{c}(g)u^{*}=\Sigma_{i}\overline{k_{i}}v_{i}^{*}$\label{l-invariant elements}

This can be proved as follows:
If we let $u=\sum_{i}m_{i}v_{i}$,and let
$u^{*}=\sum_{i}\overline{m_{i}}v_{i}^{*}$

Then we have the following result

\begin{equation}\label{l-invariant elements}
\begin{split}
&(u^{*},v)=\sum_{i}\overline{m_{i}}v_{i}^{*}(v)\\
&=\sum_{i}\overline{m_{i}}(v,v_{i})\\
&=(v,\sum_{i}m_{i}v_{i})=(v,u)
\end{split}
\end{equation}

Therefore
\begin{equation}\label{l-invariant elements}
\begin{split}
&(\Phi^{c}(g)u^{*},v)=(u^{*},\Phi(g^{-1})v)\\
&=(\Phi(g^{-1})v,u)=(v,\Phi(g)u)\\
&=(v,\sum_{i}k_{i}v_{i})=\sum_{i}\overline{k_{i}}v_{i}^{*}(v)
\end{split}
\end{equation}

Combining (3.7) and (3.9), we obtain
\begin{equation}\label{l-invariant elements}
\begin{split}
&V^{*}=\{\sum_{i}\overline{n_{i}}(\sum_{j}\overline{a_{ij}}v_{i}^{*}),n_{i}\in \mathbb{C}\}\\
&=\{\sum_{i}\overline{n_{i}}\Phi^{c}(g_{i})u^{*},n_{i}\in \mathbb{C}\}.
\end{split}
\end{equation}
$\therefore u^{*}$ generates $V^{*}$.
\end{proof}
\begin{theorem}\label{equal}
For any $(T_{\lambda},E_{\lambda})$, which satisfies the generic property, we can find a
representation $Ind_{\mathbb{R}^{n}}^{G}(\chi)=(\Phi,V)$, which satisfies the same condition such that there exists
monomorphism from $Ind_{\mathbb{R}^{n}}^{G}(\chi)$ to $(T_{\lambda},E_{\lambda})$. Here $G=\mathbb{R}^{n}\rtimes K$.
\end{theorem}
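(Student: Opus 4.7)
Given $(T_\lambda, E_\lambda)$ satisfying the generic property, so that the $|K|$ points $\{k\lambda : k \in K\}$ are pairwise distinct, my plan is to realize the required monomorphism as a matrix-coefficient map against the $K$-fixed vector produced in Lemma 3.1. Set $\chi(x) = e^{-\lambda^T \cdot x}$; genericity of $\lambda$ forces $K_\chi = \{e\}$, so $\chi$ itself is generic and $(\Phi,V) := Ind_{\mathbb{R}^n}^G(\chi)$ satisfies the generic property as required. Lemma 3.1 (with trivial $\gamma$) then supplies the distinguished $K$-fixed vector $u((x,k)) = e^{\lambda^T \cdot k^T \cdot x}$, and by Lemma 3.2 together with Remark 3.3, $V$ is irreducible and $G$-generated by $u$.

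Define $\Theta : V \to C^\infty(G/K)$ by the matrix coefficient
\begin{equation*}
\Theta(v)(g) = (v,\Phi(g)u).
\end{equation*}
Right $K$-invariance is immediate from $\Phi(k)u=u$, so $\Theta(v)$ descends to a function on $G/K \cong \mathbb{R}^n$; and unitarity of $\Phi$ yields the intertwining relation $\Theta\circ\Phi(g_0) = T_\lambda(g_0)\circ \Theta$ via $(\Phi(g_0)v,\Phi(g)u)=(v,\Phi(g_0^{-1}g)u)$. The heart of the argument is the explicit evaluation of $\Theta(u)$: since $u$ transforms by the unitary character $\chi$ under the right action of $\mathbb{R}^n$, the formal $L^2$ pairing on the non-compact $V$ (Theorem 3.4) collapses to a finite sum over $K$-coset representatives, and substituting the explicit formula for $u$ yields, up to a nonzero scalar,
\begin{equation*}
\Theta(u)(x) = \sum_{k \in K} e^{(k\lambda)^T \cdot x}.
\end{equation*}
For any $P \in \mathbb{C}[x_1,\dots,x_n]^K$, $K$-invariance of $P$ gives $P(\partial)e^{(k\lambda)^T x}=P(k\lambda)e^{(k\lambda)^T x}=P(\lambda)e^{(k\lambda)^T x}$, so $P(\partial)\Theta(u)=P(\lambda)\Theta(u)$ and $\Theta(u)\in E_\lambda$; combined with $G$-equivariance and $V=\Phi(G)u$, this forces $\Theta(V)\subset E_\lambda$.

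Finally, genericity of $\lambda$ makes the $|K|$ exponentials $e^{(k\lambda)^T x}$ linearly independent, so $\Theta(u)\neq 0$; irreducibility of $V$ then forces $\ker\Theta = 0$, giving the required monomorphism $Ind_{\mathbb{R}^n}^G(\chi)\hookrightarrow (T_\lambda,E_\lambda)$. The main obstacle is the closed-form evaluation of $\Theta(u)$: one has to pass from the $L^2$ pairing on $V$ (which is only formally convergent over the non-compact $G$) to an honest finite sum over $K$-representatives, using the right-$\mathbb{R}^n$-equivariance of $u$ and $\Phi(g)u$ to justify the reduction. Once this calculation is in hand, containment in $E_\lambda$ follows from $K$-invariance of the polynomials in $D(G/K)$, and injectivity is a formal consequence of irreducibility together with the linear independence of characters on $\mathbb{R}^n$ with distinct exponents.
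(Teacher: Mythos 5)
Your proposal is correct and rests on the same central device as the paper's proof: the intertwiner is the matrix coefficient $v \mapsto (v,\Phi(\cdot)u)$ against the $K$-fixed vector $u((x,k)) = e^{\lambda^{T}\cdot k^{T}\cdot x}$ from Lemma 3.1 (the paper writes this as $f_{v}(g)=(\Phi^{c}(g)u^{*},v)$ with $u^{*}=(\ ,u)$, which is the same function of $g$). Where you genuinely diverge is in the two verifications. For injectivity, the paper leans on its Theorem 3.4 (that $u^{*}$ generates $V^{*}$) to conclude $f_{v}\equiv 0\Rightarrow v=0$; you instead note that $\ker\Theta$ is a subrepresentation of the irreducible $V$ and rule out $\ker\Theta=V$ by the explicit evaluation $\Theta(u)(x)=\sum_{k\in K}e^{(k\lambda)^{T}\cdot x}$ together with linear independence of distinct characters of $\mathbb{R}^{n}$ (which is exactly where genericity enters). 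This is cleaner and makes Theorem 3.4 dispensable. For membership in $E_{\lambda}$, the paper checks $P(\partial)f_{v}=P(\lambda)f_{v}$ for every $v$ by a direct computation; you check it only for $\Theta(u)$ via the finite-sum formula and then propagate by $G$-equivariance and cyclicity of $u$, which additionally uses the (true, but worth stating explicitly) fact that $E_{\lambda}$ is $T_{\lambda}$-invariant because the operators in $D(G/K)$ commute with the $G$-action on $C^{\infty}(G/K)$. The one point you should nail down is the ``collapse to a finite sum'': since $\mathbb{R}^{n}\backslash G\cong K$ is finite, the correct invariant pairing on $V$ is $(f_{1},f_{2})=\sum_{k\in K}f_{1}((0,k))\overline{f_{2}((0,k))}$ rather than the divergent $\int_{G}$; with that normalization your formula for $\Theta(u)$ is exactly right (the paper is equally informal on this point), and the rest of your argument goes through.
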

\begin{proof}
For any $(T_{\lambda},E_{\lambda})$, we consider $(\Phi,V)=Ind_{\mathbb{R}^{n}}^{G}(\chi)$, where $\chi(x)=e^{-\lambda^{T}\cdot x}$. Let
$H=\{f_{v}(g)=\langle \Phi^{c}(g)u^{*},v\rangle \ \forall v\in V\}$. It's easy to get $(\pi,H)$ is a representation of $G$, where
$\pi(g_{0})f(g)=f(g_{0}^{-1}\cdot g)$ for any $g_{0}\in G$.

On one hand, we show $Ind_{\mathbb{R}^{n}}^{\mathbb{R}^{n} \rtimes K}(\chi)$ is equivalent to $(\pi,H)$. Let $F:V\rightarrow H$ by $v\mapsto f_{v}(g)=(\Phi^{c}(g)u^{*},v)$. It's easy
to get $F$ is linear and surjective. Next, we prove $E$ is injective. Suppose $f_{v}(g)=0 \ \forall g\in G$. From Theorem 3.4, we know $u^{*}$ generates $V^{*}$.
If $v\neq 0$, we can find  a $v^{*}\in V^{*}$ s.t. $(v^{*},v)=1$. However, from $(\Phi^{c}(g)u^{*},v)=0 \ for \ \forall g\in G$, we get $(v^{*},v)=0$ a contradiction. $\therefore v=0$
Therefore, $F$ is bijective, which implies $F$ is isomorphism as a linear map.

Next,for any $h\in G,v\in V$
\begin{equation}\label{l-invariant elements}
\begin{split}
&F\circ \Phi(h)v=(\Phi^{c}(g)u^{*},\Phi(h)v)\\
&=(\Phi^{c}(h^{-1})\Phi^{c}(g)u^{*},v)=(\Phi^{c}(h^{-1}g)u^{*},v)\\
&=f_{v}(h^{-1}g)=\pi(h)f_{v}(g)=\pi(h)\circ F\circ v.
\end{split}
\end{equation}
$\therefore F\circ \Phi(h)=\pi(h)\circ F$ \ for any $h\in G$. Therefore, $Ind_{\mathbb{R}^{n}}^{\mathbb{R}^{n}\rtimes K}(\chi)$ is equivalent to $(\pi,H)$.

On the other hand,we show $(\pi,H)$ is a subrepresentation of $(T_{\lambda},E_{\lambda})$.

Firstly, $f_{v}(g)$ is smooth. And for any $k\in K,g\in G$,
\begin{equation}\label{l-invariant elements}
\begin{split}
&f_{v}(gk)=(\Phi^{c}(gk)u^{*},v)=(\Phi^{c}(g)\Phi^{c}(k)u^{*},v)\\
&=(\Phi^{c}(g)u^{*},v)=f_{v}(g).
\end{split}
\end{equation}
$\therefore f_{v}(gk)=f_{v}(g)$ For any $g\in G,k\in K$.
$\therefore f_{v}(g)$ is $K$-right invariant smooth function of $G$.
We can regard $f_{v}(g)$ as a function, which belongs to $C^{\infty}(G/K)$.

Secondly, for any $g=(x_{0},k_{0})=(x_{0},e)(0,k_{0})$
\begin{equation}\label{l-invariant elements}
\begin{split}
&f_{v}(g)=(\Phi^{c}(g)u^{*},v)\\
&=(\Phi^{c}((x_{0},e)(0,k_{0})u^{*},v)=(\Phi^{c}((x_{0},e))u^{*},v)\\
&=(u^{*},\Phi((x_{0},e)^{-1})v)=(u^{*},e^{\lambda^{T}\cdot k^{T}\cdot x_{0}}v).
\end{split}
\end{equation}

For any $P(\frac{\partial}{\partial x_{1}},\cdots ,\frac{\partial}{\partial x_{n}})\in D(G/K)$, from Lemma 2.3, we have
$P(x_{1},\cdots ,x_{n})\in \mathbb{C}[x_{1},\cdots x_{n}]^{K}.$

From K acts on $\mathbb{R}^{n}$ by $K\times \mathbb{R}^{n}\rightarrow \mathbb{R}^{n} \ (k,x)\mapsto k\cdot x$
and $(\Psi(k)e^{*})(e)=e^{*}(k^{-1}\cdot e)$ \ for any $e\in \mathbb{R}^{n},e^{*}\in (\mathbb{R}^{n})^{*} and \  k\in K$.

We obtain the induced action $\Psi: K\times (\mathbb{R}^{n})^{*}\rightarrow (\mathbb{R}^{n})^{*}$

by \ $(k,\left(
                                                                                \begin{array}{c}
                                                                                  x_{1} \\
                                                                                   \cdot\\
                                                                                  \cdot \\
                                                                                  \cdot \\
                                                                                  x_{n} \\
                                                                                \end{array}
                                                                              \right))$ $\rightarrow k^{T}\cdot \left(
                                                                                                                 \begin{array}{c}
                                                                                                                   x_{1} \\
                                                                                                                   \cdot \\
                                                                                                                   \cdot \\
                                                                                                                   \cdot\\
                                                                                                                   x_{n} \\
                                                                                                                 \end{array}
                                                                                                               \right)$
If we denote $k^{T}\cdot \left(
                           \begin{array}{c}
                             x_{1} \\
                             \cdot \\
                             \cdot \\
                             \cdot \\
                             x_{n} \\
                           \end{array}
                         \right)=\left(
                                    \begin{array}{c}
                                       \Psi(k)x_{1} \\
                                      \cdot \\
                                      \cdot \\
                                      \cdot\\
                                     \Psi(k)x_{n} \\
                                    \end{array}
                                  \right)$,
then we have $P(\Psi(k)x_{1},\cdots ,\Psi(k)x_{n})=P(x_{1},\cdots ,x_{n}) \ \forall k\in K$.
Note that $k\cdot k^{T}=I$. Therefore
\begin{equation}\label{l-invariant elements}
\begin{split}
&P(\frac{\partial}{\partial x_{1}},\cdots ,\frac{\partial}{\partial x_{n}})f_{v}(g)=(u^{*},(P(\frac{\partial}{\partial x_{1}},\cdots ,\frac{\partial}{\partial x_{n}})e^{\lambda^{T}\cdot k^{T}\cdot x_{0}})v)\\
&=(u^{*},P(\Psi(k^{T})\lambda_{1},\cdots ,\Psi(k^{T})\lambda_{n})e^{\lambda^{T}\cdot k^{T}\cdot x_{0}}v)\\
&=(u^{*},P(\Psi(k^{-1})\lambda_{1},\cdots ,\Psi(k^{-1})\lambda_{n})e^{\lambda^{T}\cdot k^{T}\cdot x_{0}}v)\\
&=P(\lambda_{1},\cdots ,\lambda_{n})(u^{*},e^{\lambda^{T}\cdot k^{T}\cdot x_{0}}v)\\
&=P(\lambda_{1},\cdots ,\lambda_{n})f_{v}(g).
\end{split}
\end{equation}
For any $P(\frac{\partial}{\partial x_{1}},\cdots ,\frac{\partial}{\partial x_{n}})\in D(G/K)$
$\therefore f_{v}(g)\in E_{\lambda}$, which implies $H\subseteq E_{\lambda}$. This proves the theorem.
\end{proof}

\begin{remark}
From the construction of $Ind_{\mathbb{R}^{n}}^{\mathbb{R}^{n}\rtimes K}(\chi)$, we can see if $Ind_{\mathbb{R}^{n}}^{\mathbb{R}^{n}\rtimes K}(\chi)$
satisfies the generic property then its correspondence $(T_{\lambda},E_{\lambda})$ satisfies the same property. What's more,
once $(T_{\lambda},E_{\lambda})$ is fixed, then $\lambda$ is unique, thus $Ind_{\mathbb{R}^{n}}^{\mathbb{R}^{n}\rtimes K}(\chi)$ is unique. Therefore,
if we let $A=\{$the irreducible representation $Ind_{\mathbb{R}^{n}}^{\mathbb{R}^{n}\rtimes K}(\chi)$, which satisfies the generic property$\}$;
$B=\{$Eigenspace Representation $(T_{\lambda},E_{\lambda})$, which satisfies the generic property$\}$, there is a one-one
correspondence between $A$ and $B$.
\end{remark}

\begin{remark}
For any finite pseudo-reflection group $K$, any $\chi$ of $\mathbb{R}^{n}$ which is generic, there is a
simple fact:
\begin{equation}\label{l-invariant elements}
Ind_{\mathbb{R}^{n}}^{\mathbb{R}^{n}\rtimes K}(\chi)=\bigoplus_{k\in K}\chi^{k}
\end{equation}
Therefore dim $Ind_{\mathbb{R}^{n}}^{\mathbb{R}^{n}\rtimes K}(\chi)=\mid K\mid$
\end{remark}

\section{Irreducibility for eigenspace representations}

In this section  we first recall the invariant theory about finite groups and then use it to show
the eigenspace representations which satisfy the generic property
are irreducible.

\textbf{Notations}
Let $K$ be any finite pseudo-reflection group and let $I(\mathbb{R}^{n})$ denote the set of
$K$-invariants in $S(\mathbb{R}^{n})$ and $I_{+}(\mathbb{R}^{n})\subset I(\mathbb{R}^{n})$ the set of $K$-invariants
without constant term. Similarly, we define $I_{+}((\mathbb{R}^{n})^{*})\subset I((\mathbb{R}^{n})^{*})\subset S((\mathbb{R}^{n})^{*})$.
An element $h\in S^{c}((\mathbb{R}^{n})^{*})$ is said to be $K$-harmonic if $\partial(J)h=0$ \ for
all $J\in I_{+}(\mathbb{R}^{n})$. Here
\begin{equation}\label{l-invariant elements}
(\partial(X)f)(Y)=(\frac{d}{dt}f(Y+tX))_{t=0},f\in \varepsilon(\mathbb{R}^{n});X,Y\in \mathbb{R}^{n}.
\end{equation}

Let $H^{c}((\mathbb{R}^{n})^{*})$ denote the set of $K$-harmonic polynomial functions.Put $H((\mathbb{R}^{n})^{*})=S((\mathbb{R}^{n})^{*})\bigcap H^{c}((\mathbb{R}^{n})^{*})$. For simplicity, we let
$S=S^{c}((\mathbb{R}^{n})^{*})$,$I=I^{c}((\mathbb{R}^{n})^{*})$, $H=H^{c}((\mathbb{R}^{n})^{*})$, $I_{+}\subset I$ be the space
of invariants without constant term, $J$ the ideal $I_{+}S$.
\begin{lemma}\label{twist}
Let $K$ be a compact group of linear transformations of $\mathbb{R}^{n}$ over $R$. Then $S((\mathbb{R}^{n})^{*})=I((\mathbb{R}^{n})^{*})\cdot H((\mathbb{R}^{n})^{*})$ and therefore
$S^{c}((\mathbb{R}^{n})^{*})=I^{c}((\mathbb{R}^{n})^{*})H^{c}((\mathbb{R}^{n})^{*})$.
That is, each polynomial $P$ on $\mathbb{R}^{n}$ has the form $p=\sum_{k}i_{k}h_{k}$, where $i_{k}$ is $G$-invariant and $h_{k}$ harmonic.
\end{lemma}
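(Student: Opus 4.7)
My plan is the classical Chevalley--Steinberg duality argument, adapted to the compact group $K$.

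\emph{Step 1: A $K$--invariant pairing.} Since $K$ is compact, averaging produces a $K$--invariant positive definite inner product on $\mathbb{R}^n$ and, dually, on $(\mathbb{R}^n)^{*}$. I will promote it to a nondegenerate symmetric bilinear pairing $\langle\cdot,\cdot\rangle$ on $S=S((\mathbb{R}^n)^{*})$ by declaring, with respect to an orthonormal basis $x_1,\dots,x_n$ of $(\mathbb{R}^n)^{*}$, that $\langle x^{\alpha},x^{\beta}\rangle = \alpha!\,\delta_{\alpha,\beta}$. Equivalently, $\langle p,q\rangle=(\partial(p)q)(0)$, where $p\in S((\mathbb{R}^n)^{*})$ is viewed as a constant-coefficient differential operator via the inner-product identification $\mathbb{R}^n\cong(\mathbb{R}^n)^{*}$. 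The pairing is $K$--invariant by construction.

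\emph{Step 2: The adjoint identity.} A direct monomial computation yields
\begin{equation*}
\langle p\cdot q,\,r\rangle \;=\; \langle q,\,\partial(p)r\rangle \qquad (p,q,r\in S),
\end{equation*}
so multiplication by $p$ and differentiation by $\partial(p)$ are mutually adjoint with respect to $\langle\cdot,\cdot\rangle$.

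\emph{Step 3: $H$ is the orthogonal complement of $J=I_{+}\cdot S$.} Using Step 2,
\begin{equation*}
p\in J^{\perp} \iff \langle iq,p\rangle = \langle q,\partial(i)p\rangle = 0 \text{ for all } i\in I_{+},\ q\in S,
\end{equation*}
which by nondegeneracy is equivalent to $\partial(i)p = 0$ for all $i\in I_{+}$, i.e., $p\in H$. Hence $S=J\oplus H$ as vector spaces.

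\emph{Step 4: Degree induction for $S=I\cdot H$.} I conclude by induction on the homogeneous degree of $p\in S$. The case $\deg p =0$ is trivial. For $p$ of degree $d\ge 1$, Step 3 gives $p=h+\sum_{\alpha}i_{\alpha}s_{\alpha}$ with $h\in H$ and $i_{\alpha}\in I_{+}$; extracting the degree-$d$ parts, we may assume each summand is homogeneous of degree $d$, and since $\deg i_{\alpha}\ge 1$ this forces $\deg s_{\alpha}\le d-1$. By the inductive hypothesis each $s_{\alpha}\in I\cdot H$, so $i_{\alpha}s_{\alpha}\in I\cdot H$ and hence $p\in H+I\cdot H=I\cdot H$. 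Tensoring with $\mathbb{C}$ upgrades this to $S^{c}((\mathbb{R}^n)^{*})=I^{c}((\mathbb{R}^n)^{*})\cdot H^{c}((\mathbb{R}^n)^{*})$.

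The only step requiring genuine care is Step 2: one must fix conventions so that the polynomial algebra $S((\mathbb{R}^n)^{*})$ and the constant-coefficient differential operators on $\mathbb{R}^n$ are truly adjoint under the chosen pairing, and verify that $K$--invariance of the inner product guarantees the subspace $I_{+}$ is preserved by the operations appearing in Step 3. Once these bookkeeping points are settled, Steps 3 and 4 are formal and essentially immediate.
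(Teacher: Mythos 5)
The paper contains no proof of this lemma: it is stated verbatim and quoted from Helgason [H3] (cf.\ the citation ``Lemma 4.2[H3]'' inside the proof of Theorem 4.11), so there is no in-paper argument to compare against. Your proposal is correct and is in substance Helgason's own proof: the Fischer pairing $\langle p,q\rangle=(\partial(p)q)(0)$ built from a $K$-invariant inner product, the adjointness $\langle pq,r\rangle=\langle q,\partial(p)r\rangle$, the identification of $H$ with the orthogonal complement of $J=I_{+}S$, and induction on degree. Two small points deserve emphasis. First, since $S$ is infinite-dimensional, the conclusion $S=J\oplus H$ of Step 3 should be read degree by degree: $J$ and $H$ are graded (because $K$ preserves degree, $I_{+}$ is spanned by homogeneous invariants), distinct homogeneous components are orthogonal, and on each finite-dimensional piece $S^{k}$ the pairing is positive definite --- not merely nondegenerate --- which is what forces $J^{k}\cap(J^{k})^{\perp}=0$ and hence $S^{k}=J^{k}\oplus H^{k}$; your Step 4 only uses the graded statement, so nothing breaks. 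Second, the decomposition $S^{k}=(I_{+}S)^{k}+H^{k}$ you obtain is exactly the unproved identity the paper later invokes in the proof of Theorem 4.11, so your argument supplies that missing ingredient as well.
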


\begin{lemma}\label{twist}
Let $K$ be a finite pseudo-reflection group acting on the n-dimensional real vector space $\mathbb{R}^{n}$. Then the algebra $I^{c}((\mathbb{R}^{n})^{*})$
of invariants is generated by n homogeneous elements, which are algebraically independent.
\end{lemma}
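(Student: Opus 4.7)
The plan is to prove this as the classical Chevalley--Shephard--Todd structure theorem for the invariant algebra of a finite pseudo-reflection group. Using the notation already set up in the paper, I will show that $I = I^c((\mathbb{R}^n)^*)$ is a polynomial algebra on $n$ homogeneous generators.

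First I would choose generators. By Hilbert's basis theorem, the ideal $J = I_+ S$ of $S = S^c((\mathbb{R}^n)^*)$ is finitely generated, so one can pick a minimal set of homogeneous invariants $f_1, \dots, f_m \in I_+$ (of positive degrees $d_1 \le \dots \le d_m$) which generate $J$ as an $S$-ideal. I claim these same elements generate $I$ as a $\mathbb{C}$-algebra. For a homogeneous $f \in I_+$ one can write $f = \sum p_i f_i$ with $p_i \in S$ homogeneous of degree $\deg f - d_i$; applying the Reynolds operator $\rho(p) = \tfrac{1}{|K|} \sum_{k \in K} k \cdot p$ and using $K$-invariance of $f, f_i$ yields $f = \sum \rho(p_i) f_i$ with $\rho(p_i) \in I$ of strictly smaller degree than $f$. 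Induction on degree then expresses every homogeneous invariant as a polynomial in $f_1,\dots,f_m$.

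The core step, and the main obstacle, is the following \emph{Chevalley lemma}: if $g_1,\dots,g_m \in S$ are homogeneous with $\sum_i g_i f_i = 0$, then each $g_i \in J$. This is where the pseudo-reflection hypothesis on $K$ is essential. For any pseudo-reflection $s \in K$ with fixing hyperplane cut out by a linear form $L_s$, and any $p \in S$, one has $s \cdot p - p \in L_s S$. Assume for contradiction that $g_1 \notin J$ for a counter-example chosen with $\deg g_1$ minimal. Applying $s$ to the relation $\sum g_i f_i = 0$ (and using $K$-invariance of the $f_i$) and subtracting gives $\sum (s g_i - g_i) f_i = 0$; dividing by $L_s$ produces a new homogeneous syzygy of strictly smaller degree, so by minimality $(s g_1 - g_1)/L_s \in J$, i.e.\ $s g_1 \equiv g_1 \pmod{J}$. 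Since $K$ is generated by pseudo-reflections, $k g_1 \equiv g_1 \pmod{J}$ for every $k \in K$, so $\rho(g_1) \equiv g_1 \pmod{J}$. But $\rho(g_1)$ is a homogeneous invariant of positive degree, hence lies in $I_+ \subset J$, forcing $g_1 \in J$, a contradiction.

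With the Chevalley lemma in hand, algebraic independence follows: a nontrivial homogeneous polynomial relation $P(f_1,\dots,f_m) = 0$ of least degree, differentiated with respect to the $x_j$ and written out via the chain rule, yields a relation $\sum_i \bigl(\tfrac{\partial P}{\partial Y_i}(f_1,\dots,f_m)\bigr) \tfrac{\partial f_i}{\partial x_j} = 0$; the Chevalley lemma applied to each of these (the $n$ relations, one for each $j$) combined with an invertibility argument on the Jacobian matrix modulo $J$ contradicts the minimality of $P$. Finally, to see $m = n$, observe that $\mathbb{C}(S) = \mathbb{C}(x_1,\dots,x_n)$ has transcendence degree $n$ over $\mathbb{C}$, and $\mathbb{C}(S)$ is a finite Galois extension of $\mathbb{C}(I)$ with group $K$, so $\mathrm{tr.deg}_{\mathbb{C}}\,\mathbb{C}(I) = n$; on the other hand $\mathbb{C}(I) \subset \mathbb{C}(f_1,\dots,f_m)$ has transcendence degree $m$, giving $m = n$. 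The hard part throughout is unquestionably the Chevalley lemma, since this is the one place where the pseudo-reflection assumption enters in an essential way; without it the lemma fails and the ring of invariants need not be polynomial.
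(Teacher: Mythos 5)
The paper does not prove this lemma at all: it is the Chevalley--Shephard--Todd theorem, quoted from the literature (Helgason [H3]; cf.\ Steinberg [S1]), and the key syzygy lemma you re-derive along the way is itself stated separately in the paper, also without proof, as Lemma 4.10. So you are supplying a proof where the paper supplies a citation. Your outline is the standard Chevalley argument (as in Bourbaki or Humphreys), and the first step --- that a minimal homogeneous generating set of the ideal $J=I_{+}S$ generates $I$ as an algebra, via the Reynolds operator and induction on degree --- is correct as written.

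Two places need repair. First, your induction on $\deg g_{1}$ in the syzygy lemma has no base case: if the minimal counterexample has $\deg g_{1}=0$, then $\rho(g_{1})=g_{1}$ is a nonzero constant, not an element of $I_{+}$, and the contradiction evaporates. The degree-zero case must be treated separately, by solving $f_{1}=-g_{1}^{-1}\sum_{i\geq 2}g_{i}f_{i}$ and applying the Reynolds operator to contradict $f_{1}\notin\sum_{i\geq 2}f_{i}I$; this is the only point in the entire proof where minimality of $f_{1},\dots,f_{m}$ is used, so it cannot be skipped. Second, the algebraic-independence step is not correct as described: there is no ``invertibility of the Jacobian modulo $J$.'' The Jacobian determinant of a set of basic invariants is, up to scalar, the product of the reflecting-hyperplane linear forms, a nonzero but non-invertible (indeed nilpotent) element of the coinvariant algebra $S/J$ --- already for the reflection group of order $2$ permuting the coordinates of $\mathbb{R}^{2}$, $x_{1}-x_{2}\notin J$ yet $(x_{1}-x_{2})^{2}\in J$. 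The actual argument takes a minimal isobaric relation $P$, sets $P_{i}=(\partial P/\partial Y_{i})(f_{1},\dots,f_{m})$, passes to a subset $P_{1},\dots,P_{k}$ minimally generating the ideal $(P_{1},\dots,P_{m})$ of $I$, applies the syzygy lemma to get $\partial f_{j}/\partial x_{s}+\sum_{i>k}r_{ij}\,\partial f_{i}/\partial x_{s}\in J$ for all $j\leq k$ and all $s$, and then uses Euler's identity and a degree comparison to exhibit some $f_{j}$ in the ideal generated by the others, contradicting minimality. Once independence is established, your transcendence-degree argument for $m=n$ is fine.
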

\begin{lemma}\label{twist}
Let $K$ be a finite pseudo-reflection group, then we obtain $\sum_{0}^{\infty}dim(I^{k})t^{k}=\frac{1}{\mid G\mid}\sum_{g\in K}(det(I-tg))^{-1}$,
where $I$ is the identity operator on $(\mathbb{R}^{n})^{c}$
and $I^{k}$ denotes the subspaces consisting of homogeneous elements of degree $k$.
\end{lemma}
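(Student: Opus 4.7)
The statement is Molien's formula for the Hilbert series of the invariant ring; in our setting it holds for any finite group action and does not actually use the pseudo-reflection hypothesis. The plan is to express $\dim(I^{k})$ as a trace, average over the group, and then sum a geometric series degree by degree.

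First, I would use the classical Reynolds projector. For each $k$, the operator
\begin{equation*}
\pi_{k}=\frac{1}{|K|}\sum_{g\in K}g\big|_{S^{k}((\bbR^{n})^{c*})}
\end{equation*}
is an idempotent on $S^{k}$ with image exactly $I^{k}$, the $K$-invariants of degree $k$. Hence $\dim(I^{k})=\mathrm{tr}(\pi_{k})=\tfrac{1}{|K|}\sum_{g\in K}\mathrm{tr}(g\mid S^{k})$. Multiplying by $t^{k}$ and summing over $k\geq 0$ then interchanging the two sums gives
\begin{equation*}
\sum_{k=0}^{\infty}\dim(I^{k})\,t^{k}=\frac{1}{|K|}\sum_{g\in K}\Big(\sum_{k=0}^{\infty}\mathrm{tr}(g\mid S^{k})\,t^{k}\Big),
\end{equation*}
so everything reduces to computing the inner generating function for a single $g\in K$.

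Next I would diagonalize $g$ on the complexification. Since $K\subset O(n)$ is finite, $g$ acts on $(\bbR^{n})^{c}$ with eigenvalues $\lambda_{1},\dots,\lambda_{n}$ (roots of unity) in some basis $e_{1},\dots,e_{n}$; the dual basis diagonalizes $g$ on $(\bbR^{n})^{c*}$ with eigenvalues $\lambda_{1}^{-1},\dots,\lambda_{n}^{-1}$, and the monomial basis of $S^{k}((\bbR^{n})^{c*})$ consists of simultaneous eigenvectors with eigenvalues $\lambda_{1}^{-a_{1}}\cdots\lambda_{n}^{-a_{n}}$ for $a_{1}+\cdots+a_{n}=k$. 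Therefore
\begin{equation*}
\sum_{k=0}^{\infty}\mathrm{tr}(g\mid S^{k})\,t^{k}=\prod_{i=1}^{n}\sum_{a_{i}\geq 0}(\lambda_{i}^{-1}t)^{a_{i}}=\prod_{i=1}^{n}\frac{1}{1-\lambda_{i}^{-1}t}=\det(I-tg^{-1})^{-1}.
\end{equation*}
Summing over $g\in K$ and using the substitution $g\mapsto g^{-1}$ (a bijection of $K$) converts this to the desired $\det(I-tg)^{-1}$.

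Essentially there is no deep obstacle: the argument is bookkeeping once one recognizes the Reynolds projection identity. The only point that needs a little care is the relabelling $g\leftrightarrow g^{-1}$ at the end, together with the distinction between the $g$-action on $V$ and on $V^{*}$; I would include that reindexing explicitly so the formula on the right-hand side matches the statement exactly. Note also that $|G|$ in the statement should read $|K|$; this will be transparent from the averaging step.
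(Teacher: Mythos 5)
Your proof is correct: it is the standard derivation of Molien's formula via the Reynolds projector (dimension of invariants as the trace of an averaging idempotent, diagonalization of each $g$ on the complexification, termwise geometric series, and the reindexing $g\mapsto g^{-1}$). The paper itself offers no proof of this lemma --- it is quoted as a classical result from Helgason's book --- so there is nothing to compare against; your argument fills that gap cleanly, and you are right both that the pseudo-reflection hypothesis is not needed (finiteness of $K$ suffices) and that the $\mid G\mid$ in the statement should read $\mid K\mid$. One could even note that since $K\subset O(n)$ each $g$ is conjugate to $g^{-1}$ on the complexification (the eigenvalues are closed under inversion), so $\det(I-tg)=\det(I-tg^{-1})$ and the final reindexing is not strictly necessary, but including it as you do is harmless and keeps the argument valid for arbitrary finite linear groups.
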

\begin{lemma}\label{twist}
Let $K$ be a finite pseudo-reflection group acting on the real vector space $\mathbb{R}^{n}$. Let $j_{1},\cdots ,j_{n}$ be homogeneous generators for the
algebra $I^{c}((\mathbb{R}^{n})^{*})$ of $K$-invariants. Let $d_{1},\cdots ,d_{n}$be their respective degrees. Then
\begin{equation}\label{l-invariant elements}
\Pi_{i=1}^{n}d_{i}=\mid K\mid.
\end{equation}
Where $\mid K\mid$ denotes the order of $K$.
\end{lemma}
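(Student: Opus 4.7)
The plan is to compute the Hilbert series of $I^c((\mathbb{R}^n)^*)$ in two different ways and compare their leading asymptotics as $t \to 1^-$. First, Lemma 4.2 gives $I^c((\mathbb{R}^n)^*) = \mathbb{C}[j_1, \ldots, j_n]$ with $j_1, \ldots, j_n$ algebraically independent and homogeneous of degrees $d_1, \ldots, d_n$, so counting monomials $j_1^{a_1} \cdots j_n^{a_n}$ by total degree $\sum a_i d_i$ produces
\begin{equation*}
\sum_{k=0}^{\infty} \dim(I^k)\, t^k \;=\; \prod_{i=1}^{n} \frac{1}{1 - t^{d_i}}.
\end{equation*}
Second, Lemma 4.3 supplies Molien's formula
\begin{equation*}
\sum_{k=0}^{\infty} \dim(I^k)\, t^k \;=\; \frac{1}{|K|} \sum_{g \in K} \frac{1}{\det(I - tg)}.
\end{equation*}

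The next step is to multiply both identities by $(1-t)^n$ and send $t \to 1^-$. On the product side,
\begin{equation*}
(1-t)^n \prod_{i=1}^{n} \frac{1}{1 - t^{d_i}} \;=\; \prod_{i=1}^{n} \frac{1}{1 + t + \cdots + t^{d_i - 1}} \;\longrightarrow\; \prod_{i=1}^{n} \frac{1}{d_i}.
\end{equation*}
On the Molien side, the $g = e$ term contributes exactly $(1-t)^n \cdot (1-t)^{-n} = 1$. For any $g \neq e$, I factor $\det(I - tg) = \prod_j (1 - t \lambda_j)$ over the eigenvalues $\lambda_j$ of $g$; the order of vanishing at $t = 1$ equals the multiplicity $m$ of the eigenvalue $1$, and $m \leq n - 1$ because $g \neq e$ cannot fix all of $\mathbb{R}^n$. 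Hence $(1-t)^n / \det(I - tg)$ has a zero of order at least $1$ at $t = 1$ and vanishes in the limit, so every non-identity term drops out and the entire right-hand side tends to $1/|K|$.

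Equating the two limits yields $\prod_{i=1}^n \tfrac{1}{d_i} = \tfrac{1}{|K|}$, which rearranges to $\prod_i d_i = |K|$. The only step that requires more than bookkeeping is the justification that the non-identity contributions to Molien's finite sum disappear in the limit, and this reduces to the elementary observation that a non-identity orthogonal transformation must have at least one eigenvalue different from $1$. Everything else is a direct combination of the two Hilbert-series expressions furnished by Lemmas 4.2 and 4.3.
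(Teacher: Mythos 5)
Your proposal is correct and follows essentially the same route as the paper: both compute the Hilbert series of the invariant ring once via the algebraic independence of the homogeneous generators (giving $\prod_i(1-t^{d_i})^{-1}$) and once via Molien's formula, then multiply through by $(1-t)^n$ and let $t\to 1$, where only the identity element survives on the Molien side. Your justification that the non-identity terms vanish (the eigenvalue $1$ of $g\neq e$ has multiplicity at most $n-1$) is the same observation the paper makes, just spelled out a bit more explicitly.
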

\begin{proof}
By general theory(see[S2],PP.100), we have $trdeg(Q/C)=trdeg(Q/K)+trdeg(K/C)$. And we observe from it the $j_{i}$ are algebraically independent. Let
$j\in I^{k}$, the space of homogeneous invariants of degree k. Then $j$ is a linear combination of monomials $j_{1}^{a_{1}}\cdots j_{n}^{a_{n}}$
for which
\begin{equation}\label{l-invariant elements}
 a_{1}d_{1}+\cdots +a_{n}d_{n}=k.
\end{equation}
By the algebraic independence of the $j_{i}$, it follows that dim($I^{k}$) equals the number of nonnegative integral solutions $(a_{1},\cdots ,a_{n})$
to (4.7). Hence
\begin{equation}\label{l-invariant elements}
\sum_{0}^{\infty}dim(I^{k})t^{k}=(1-t^{d_{1}})^{-1}\cdots  (1-t^{d_{n}})^{-1}
\end{equation}
Combining this with Lemma 4.4[H3], we conclude
\begin{equation}\label{l-invariant elements}
 \mid K\mid\cdot \prod_{1\leq i\leq n}(1+t+\cdots +t^{d_{i}-1})^{-1}=\sum_{\sigma \in K}\cdot \prod_{1\leq j\leq n}\frac{1-t}{1-tC_{\sigma_{j}}}
\end{equation}
If the $C_{\sigma_{j}}$ are the eigenvalues of $\sigma$ counted with multiplicity. Letting $t\rightarrow 1$, only the term $\sigma=1$ on the right
gives a contribution, so we obtain the formula of the lemma.
\end{proof}
\begin{lemma}\label{twist}
Let $j_{1}\cdots j_{m}\in I$ such that $j_{1}\notin\sum_{2}^{m}j_{s}I$. If $q_{1}\cdots q_{m}\in S$ are homogeneous elements such that $\sum_{1}^{m}j_{s}q_{s}=0$,
then $q_{1}\in J$.
\end{lemma}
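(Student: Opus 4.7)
The plan is to prove the lemma by induction on $\deg q_1$, using the Reynolds averaging operator $R=\frac{1}{|K|}\sum_{g\in K}g$ together with the fact that $K$ is generated by pseudo-reflections.

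For the base case $\deg q_1=0$, apply $R$ to $\sum_{s=1}^m j_s q_s=0$. Since each $j_s\in I$ is $K$-invariant, $R(j_s q_s)=j_s R(q_s)$, and since $q_1$ is a scalar, $R(q_1)=q_1$. The relation rewrites as
\begin{equation*}
q_1\, j_1=-\sum_{s\geq 2}j_s R(q_s)\in\sum_{s=2}^m j_s I,
\end{equation*}
because $R(q_s)\in I$ for every $s$. The hypothesis $j_1\notin\sum_{s=2}^m j_s I$ forces $q_1=0\in J$.

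For the inductive step, assume $\deg q_1=d>0$ and the result for strictly smaller degrees. Pick any pseudo-reflection $\sigma\in K$ and let $\alpha_\sigma\in(\mathbb{R}^n)^*$ be a linear form cutting out the hyperplane that $\sigma$ fixes pointwise. For any $f\in S$, the polynomial $(1-\sigma)f$ vanishes on this hyperplane and hence factors as $\alpha_\sigma\tilde f$ with $\deg\tilde f=\deg f-1$. Applying $\sigma$ to the relation $\sum_s j_s q_s=0$ and subtracting (using $\sigma(j_s)=j_s$) gives $\alpha_\sigma\sum_s j_s\tilde q_s=0$, and cancelling the non-zero-divisor $\alpha_\sigma$ yields a new relation $\sum_s j_s\tilde q_s=0$ of the same form with $\deg\tilde q_1=d-1$. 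Induction delivers $\tilde q_1\in J$, hence $(1-\sigma)q_1=\alpha_\sigma\tilde q_1\in J$. Because $K$ is generated by pseudo-reflections and $J$ is $K$-stable (being generated by the $K$-invariant subspace $I_+$), a telescoping expansion of an arbitrary $g\in K$ as a word in pseudo-reflections shows $(1-g)q_1\in J$ for every $g\in K$. Averaging, $q_1-R(q_1)\in J$; and $R(q_1)$ is an invariant homogeneous of positive degree $d$, so $R(q_1)\in I_+\subset J$. Therefore $q_1\in J$.

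The main difficulty is concentrated in the inductive step: one must recognize that cancelling $\alpha_\sigma$ is legitimate (since $S$ is a domain) and, crucially, that the cancelled relation still satisfies the structural hypothesis of the lemma, which involves only the $j_s$ and not the $q_s$. Apart from this, the argument is a clean combination of Reynolds averaging with the defining divisibility property of pseudo-reflections in the spirit of Chevalley's theorem.
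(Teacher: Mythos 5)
Your proof is correct and is essentially the standard argument: the paper states this lemma without proof, citing Helgason [H3], and the proof given there (going back to Chevalley and Solomon) is exactly your induction on $\deg q_{1}$, using the base-case Reynolds averaging, the divisibility of $(1-\sigma)f$ by the linear form $\alpha_{\sigma}$ for a pseudo-reflection $\sigma$, cancellation of $\alpha_{\sigma}$ in the integral domain $S$, and the telescoping over a word in generating reflections combined with $K$-stability of $J$. No gaps.
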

\begin{theorem}\label{equal}
Let $K$ be a finite pseudo-reflection group and let the notation be as above. Then dim$H$=$\mid K\mid$ and the mapping $\phi:j\bigotimes h\rightarrow jh$
extends to a linear bijection of $I\bigotimes H$ onto $S$. Moreover,
\begin{equation}\label{l-invariant elements}
 \sum_{k\geq 0}(dimH^{k})t^{k}=\prod_{1\leq i\leq n}(1+t+\cdots +t^{d_{i}-1})
\end{equation}
\end{theorem}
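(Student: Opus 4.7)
The plan is to show that the multiplication map $\phi:I\otimes H\to S$, $j\otimes h\mapsto jh$, is a graded linear bijection, and then read off the Poincar\'e series identity and evaluate at $t=1$ using Lemma 4.5. Surjectivity of $\phi$ is immediate from Lemma 4.2, which already asserts $S=IH$; the substantive content of the theorem is injectivity.

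For the injectivity I would first record the orthogonality $H\cap I_+S=0$. On $S$ put the $K$-invariant pairing $\langle P,Q\rangle=(\partial(P)Q)(0)$; this pairing satisfies $\langle JP,h\rangle=\langle P,\partial(J)h\rangle$ for $J\in I_+$, $P\in S$, $h\in H$, and the defining relation $\partial(J)h=0$ for $J\in I_+$ and $h\in H$ then says precisely that $H=(I_+S)^\perp$, so $H\cap I_+S=0$. Next I would take a nonzero homogeneous $\omega=\sum_s j_s\otimes h_s\in\ker\phi$ of minimal total degree, with the $h_s$ linearly independent in $H$. Splitting each $j_s$ into its constant term $c_s\in\mathbb{C}$ and its part $j_s^+\in I_+$, the equation $\sum j_s h_s=0$ reads
\[
  \sum_s c_s h_s + \sum_s j_s^+ h_s = 0,
\]
where the first sum lies in $H$ and the second in $I_+S$; the orthogonality forces each to vanish, whence $c_s=0$ for every $s$, i.e.\ every $j_s\in I_+$. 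Using Lemma 4.3 to write $j_s=\sum_i j^{(i)}p_{s,i}$ in the fundamental invariants and regrouping gives $\sum_i j^{(i)}q_i=0$ with $q_i=\sum_s p_{s,i}h_s\in S$. Algebraic independence of $j^{(1)},\ldots,j^{(n)}$ ensures $j^{(1)}\notin(j^{(2)},\ldots,j^{(n)})I$ (and similarly for each index after relabeling), so Lemma 4.6 yields $q_i\in I_+S$ for every $i$. A descent on degree then closes the loop and contradicts the minimality of $\omega$.

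Once $\phi$ is known to be a graded isomorphism, Poincar\'e series factorize:
\[
  \sum_{k\ge 0}(\dim S^k)t^k=\Bigl(\sum_{k\ge 0}(\dim I^k)t^k\Bigr)\Bigl(\sum_{k\ge 0}(\dim H^k)t^k\Bigr).
\]
The left side is $(1-t)^{-n}$, while by Lemma 4.3 and the algebraic independence of $j^{(1)},\ldots,j^{(n)}$ the first factor on the right equals $\prod_{i=1}^n(1-t^{d_i})^{-1}$. Dividing gives
\[
  \sum_{k\ge 0}(\dim H^k)t^k=\prod_{i=1}^n(1+t+\cdots+t^{d_i-1}),
\]
which is the stated identity. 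Substituting $t=1$ gives $\dim H=\prod_{i=1}^n d_i$, and Lemma 4.5 identifies this product with $|K|$.

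The main obstacle is the injectivity step: combining Lemma 4.6 with the harmonic orthogonality is delicate because Lemma 4.6 only concludes that the coefficients $q_i$ lie in $I_+S$ rather than vanish outright, so the argument must be organized as a descent on degree that ultimately pushes the hypothetical relation into degree zero, where the orthogonality finishes the job cleanly.
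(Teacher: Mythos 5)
Your overall architecture matches the paper's: surjectivity of $\phi$ from $S=IH$, injectivity via the key lemma that a homogeneous relation $\sum_s j_sq_s=0$ with $j_1\notin\sum_{s\ge 2}j_sI$ forces $q_1\in J=I_+S$, then the Poincar\'e series factorization $(1-t)^{-n}=\prod_i(1-t^{d_i})^{-1}\cdot\sum_k(\dim H^k)t^k$ and evaluation at $t=1$ using $\prod_i d_i=|K|$. The Poincar\'e series half is correct, and your justification of $H\cap I_+S=0$ via the pairing $\langle P,Q\rangle=(\partial(P)Q)(0)$ is actually more explicit than what the paper writes. The problem is that your injectivity argument does not close. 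After reducing to $j_s\in I_+$ you expand only \emph{linearly} in the fundamental invariants, $j_s=\sum_i j^{(i)}p_{s,i}$, and conclude $q_i=\sum_s p_{s,i}h_s\in I_+S$. But $q_i$ lies in $IH=S$, not in $H$, so the orthogonality $H\cap I_+S=0$ gives nothing; and $\sum_s p_{s,i}\otimes h_s$ is not an element of $\ker\phi$ (its image under $\phi$ is $q_i$, which need not vanish), so no lower-degree kernel element is produced and the minimality of $\omega$ is not contradicted. The sentence ``a descent on degree then closes the loop'' is precisely where the proof is missing; you flag this yourself as the main obstacle but never supply the step.

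The paper closes this gap differently: it expands each invariant coefficient $I_s$ into \emph{monomials} $j_1^{m_1}\cdots j_n^{m_n}$ in the fundamental invariants and regroups by monomial, obtaining $\sum_{(m)}\bigl(\sum_s a_{(m),s}h_s\bigr)j_1^{m_1}\cdots j_n^{m_n}=0$. Among the finitely many monomials occurring, at least one is not in the ideal of $I$ generated by the others, so the key lemma places its coefficient $\sum_s a_{(m),s}h_s$ in $J$; since that coefficient lies in $H$ and $H\cap J=0$, it vanishes, and linear independence of the $h_s$ forces the $a_{(m),s}$ to be zero --- a contradiction obtained in one step, with no descent. Your route can in principle be salvaged (e.g.\ by iterating your step to show $j_s\in(I_+)^N$ for all $N$, hence $j_s=0$ by homogeneity), but that is a genuinely different and longer argument than the one you gestured at, and as written the injectivity claim is not proved.
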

\begin{proof}
We know from Lemma 4.2[H3] that $\phi$ is surjective. To prove that it is injective, we must show that if $\Sigma_{r,s}a_{r,s}i_{r}h_{s}=0$, where $a_{r,s}\in \mathbb{C}$
and $\{i_{r}\}$and $\{h_{s}\}$ are homogeneous bases of the vector spaces $I$ and $H$, respectively, then $a_{r,s}=0$. We write the relation in the form
\begin{equation}\label{l-invariant elements}
 \sum_{s}h_{s}(\sum_{r}a_{r,s}i_{r})=0.
\end{equation}
Put \ $I_{s}=\sum_{r}a_{r,s}i_{r}$. We have to prove that each $I_{s}=0$ and for this it suffices to consider the case in which each $I_{s}$ is homogeneous and deg$h_{s}+degI_{s}$ the same
for all $s$. Suppose there were an $I_{s}\neq 0$. We write it in the form
\begin{equation}\label{l-invariant elements}
 I_{s}=\sum a_{m_{1},\cdots m_{n},s}j_{1}^{m_{1}}\cdots j_{n}^{m_{n}}
\end{equation}
with nonzero coefficients $a_{m_{1},\cdots m_{n},s}$. Then
\begin{equation}\label{l-invariant elements}
 \sum_{(m)}(\sum_{s}a_{m_{1},\cdots m_{n},s}h_{s})j_{1}^{m_{1}}\cdots j_{n}^{m_{n}}=0,
\end{equation}
and at least one of the monomials
$j_{1}^{m_{1}}\cdots j_{n}^{m_{n}}$ is not in the ideal in $I$ generated by the others. The corresponding term
\begin{equation}\label{l-invariant elements}
\sum_{s}a_{m_{1},\cdots m_{n},s}h_{s}
\end{equation}
then belongs to
$J$ according to Lemma 4.10[H3]. But by $S^{k}((\mathbb{R}^{n})^{*})=(I_{+}((\mathbb{R}^{n})^{*})S((\mathbb{R}^{n})^{*}))^{k}+H^{k}((\mathbb{R}^{n})^{*})$, this term will have to vanish and then the linear independence of
the $h_{s}$ gives the contradiction $a_{m_{1},\cdots m_{n},s}=0$.

The identification $I\bigotimes H=S$ implies the identity
\begin{equation}\label{l-invariant elements}
 \sum_{k\geq 0}(dimI^{k})t^{k}\sum_{l\geq 0}(dimH^{l})t^{l}=\sum_{m\geq 0}(dimS^{m})t^{m}.
\end{equation}
Since the right-hand side equals $(1-t)^{-n}$ the formula for $(dimH^{l})t^{l}$ follows from (4.8). Putting $t=1$, we obtain the formula dim$H$=$\mid K\mid$ form Lemma 4.5[S].
\end{proof}
\begin{theorem}\label{Main Theorem}
In the notation above, for each $\lambda\in \mathbb{C}^{n}$, we have dim$E_{\lambda}\leq \mid K\mid$. Furthermore, for the case when $\lambda$ is generic, $(T_{\lambda},E_{\lambda})$ is irreducible.
If we let $A=\{Ind_{R^{n}}^{R^{n}\rtimes K}(\chi)$ $\mid \chi$ is generic $\}$,
$B=\{(T_{\lambda},E_{\lambda})\mid E_{\lambda}$ is generic$\}$. Then there exists equivalent one-one correspondence between the elements of $A$ and $B$.
\end{theorem}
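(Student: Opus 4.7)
The heart of the matter is the dimension bound $\dim E_\lambda\le |K|$; once it is in hand, everything else assembled in Sections 2, 3 and the beginning of Section 4 fits together immediately. So my plan is to prove the bound first, then quote Theorem 3.5 and Remark 3.7 to pin $\dim E_\lambda=|K|$ in the generic case and conclude irreducibility and the bijection of $A$ with $B$.

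For the dimension bound, I would use the canonical decomposition $S=I\cdot H$ from Theorem 4.11 (so that $\dim H=|K|$) combined with an evaluation-at-origin map. Concretely, given $f\in E_\lambda$ I define
\begin{equation*}
\Phi_f\colon H\longrightarrow \mathbb{C},\qquad \Phi_f(h)=\bigl(h(\partial)f\bigr)(0),
\end{equation*}
which is linear in $f$ and produces a linear map $E_\lambda\to H^{*}$. I would then show this map is injective, from which $\dim E_\lambda\le \dim H=|K|$ follows. The key computation is that any polynomial $Q$ decomposes as $Q=\sum_k i_k h_k$ with $i_k\in I$ and $h_k\in H$ by Lemma 4.1, and because the constant-coefficient operators $i_k(\partial)$ and $h_k(\partial)$ commute and $i_k(\partial)f=i_k(\lambda)f$ by definition of $E_\lambda$, one obtains
\begin{equation*}
\bigl(Q(\partial)f\bigr)(0)=\sum_k i_k(\lambda)\,\Phi_f(h_k).
\end{equation*}
Hence $\Phi_f=0$ forces every Taylor coefficient of $f$ at the origin to vanish.

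To pass from vanishing Taylor series to $f\equiv 0$, I need $f$ to be real analytic. This is where the only genuine analytic input enters: since $K\subseteq O(n)$, the polynomial $x_1^{2}+\cdots+x_n^{2}$ lies in $\mathbb{C}[x_1,\ldots,x_n]^K$, so the Laplacian $\Delta$ belongs to $D(G/K)$, and $f$ satisfies $\Delta f=-(\lambda_1^{2}+\cdots+\lambda_n^{2})f$. This is a constant-coefficient elliptic equation, so elliptic regularity makes $f$ real analytic on $\mathbb{R}^{n}$, and vanishing of its Taylor expansion at $0$ forces $f\equiv 0$. This analyticity argument is the main obstacle in the proof and the only place where anything beyond pure algebra is invoked.

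With $\dim E_\lambda\le |K|$ in hand, the rest is bookkeeping. For generic $\lambda$, Theorem 3.5 supplies a monomorphism $F\colon \mathrm{Ind}_{\mathbb{R}^{n}}^{G}(\chi)\hookrightarrow E_\lambda$ with $\chi(x)=e^{-\lambda^{T}\cdot x}$, and Remark 3.7 gives $\dim \mathrm{Ind}_{\mathbb{R}^{n}}^{G}(\chi)=|K|$. Combining these with the bound forces $F$ to be a $G$-equivariant isomorphism, so $(T_\lambda,E_\lambda)$ is equivalent to the induced representation, which is irreducible by Remark 3.3. The one-one correspondence between $A$ and $B$ then follows from Remark 3.6, since the construction $\chi\mapsto (T_\lambda,E_\lambda)$ and the uniqueness of $\lambda$ make the assignment well defined, injective, surjective, and intertwining by $F$. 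This completes the proof.
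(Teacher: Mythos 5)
Your proposal is correct and follows essentially the same route as the paper: the bound $\dim E_\lambda\le |K|$ via the decomposition $S=I\cdot H$ and the injectivity of $f\mapsto \bigl((h_i(\partial)f)(x_0)\bigr)_i$, followed by the comparison with $\dim \mathrm{Ind}_{\mathbb{R}^n}^{G}(\chi)=|K|$ to force equivalence and irreducibility. The one genuine improvement is that you justify the step the paper merely asserts as ``clear'' --- namely that vanishing of all $(Q(\partial)f)(x_0)$ forces $f\equiv 0$ --- by observing that $\Delta\in D(G/K)$ makes $f$ real analytic via elliptic regularity; this is exactly the missing ingredient and your argument for it is sound.
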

\begin{proof}
From the conclusion of Theorem 4.11[H3], dim$H$=$\mid K\mid$. Let $h_{1},\cdots h_{\mid K\mid}$ be a basis of $H$ and let $H_{1},\cdots H_{\mid K\mid}$ be the corresponding
members of $S((\mathbb{R}^{n})^{c}))$. [Under the identification of $(\mathbb{R}^{n})^{c}$ and $((\mathbb{R}^{n})^{c})^{*}$ by means of $B$, which is a nondegenerate symmetric bilinear form on
$(\mathbb{R}^{n})^{c}\times (\mathbb{R}^{n})^{c}]$. Let $f\in E_{\lambda}$ and put for $x_{0}\in R^{n}$
\begin{equation}\label{l-invariant elements}
C_{i}(f)=(\partial(H_{i})f)(x_{0})(1\leq i\leq \mid K\mid).
\end{equation}
Since $S=IH$, it is clear that if $C_{1}(f)=\cdots C_{\mid K\mid}(f)=0$, then $f=0$. Thus the mapping
\begin{equation}\label{l-invariant elements}
f\mapsto (C_{1}(f),\cdots ,C_{\mid K\mid}(f))
\end{equation}
is one-to-one linear mapping of $E_{\lambda}$ into $\mathbb{C}^{\mid K\mid}$. So dim$E_{\lambda}\leq \mid K\mid$. Thus we have proved the first statement of the theorem.

For the second part, for any $(T_{\lambda},E_{\lambda})$ which satisfies the generic property, from the conclusion of Theorem 3.10, there exists $Ind_{\mathbb{R}^{n}}^{\mathbb{R}^{n}\rtimes K}(\chi)$ which
satisfies the same property and a monomorphism from $Ind_{\mathbb{R}^{n}}^{\mathbb{R}^{n}\rtimes K}(\chi)$ to $(T_{\lambda},E_{\lambda})$. Therefore,
dim$Ind_{\mathbb{R}^{n}}^{\mathbb{R}^{n}\rtimes K}(\chi)\leq$ dim$E_{\lambda}\leq \mid K\mid$. However, according to Remark 3.16, we obtain
dim$Ind_{\mathbb{R}^{n}}^{\mathbb{R}^{n}\rtimes K}(\chi)=\mid K\mid$. Then dim$Ind_{\mathbb{R}^{n}}^{\mathbb{R}^{n}\rtimes K}(\chi)=$ dim$E_{\lambda}= \mid K\mid$.
Therefore $Ind_{\mathbb{R}^{n}}^{\mathbb{R}^{n}\rtimes K}(\chi)\cong (T_{\lambda},E_{\lambda})$, which $(T_{\lambda},E_{\lambda})$ is irreducible.
Furthermore,from Theorem 3.10, Remark 3.15 and the conclusion above, we obtain there exists equivalent one-one correspondence between the elements $A$ and $B$.
\end{proof}

\section{The $D_{n}$ case}

In this section, we use the knowledge of previous sections to calculate explicitly for a particular example when
$G=R^{2}\rtimes D_{n}$, $K=D_{n}$. Here $D_{n}$ is the dihendral group, its elements have the following form:
\begin{equation}\label{l-invariant elements}
R_{k}=\left(
        \begin{array}{cc}
          cos\frac{2\pi k}{n} & -sin\frac{2\pi k}{n} \\
          sin\frac{2\pi k}{n} & cos\frac{2\pi k}{n} \\
        \end{array}
      \right)
\end{equation}
and
\begin{equation}\label{l-invariant elements}
S_{k}=\left(
        \begin{array}{cc}
          cos\frac{2\pi k}{n} & sin\frac{2\pi k}{n} \\
          sin\frac{2\pi k}{n} & -cos\frac{2\pi k}{n} \\
        \end{array}
      \right)
\end{equation}
\textbf{$D_{n}$ acts on $(\mathbb{R}^{2})^{*}$}
Each $g\in D_{n}\subset GL(\mathbb{R}^{2})$ acts on $\mathbb{R}^{2}$ and on $(\mathbb{R}^{2})^{*}$ by standard action and $(\Psi(g)e^{*})(e)=e^{*}(g^{-1}\cdot e)$, respectively. These
actions extend to automorphisms of $S(\mathbb{R}^{2})$ and $S((\mathbb{R}^{2})^{*})$. According to the above definition, we can calculate the action explicitly.

Let
$e_{1}=\left(
         \begin{array}{c}
           1 \\
           0 \\
         \end{array}
       \right)$ \ $e_{2}=\left(
                           \begin{array}{c}
                             0 \\
                             1 \\
                           \end{array}
                         \right)$ \ $x_{i}(e_{j})=\delta_{ij}$ \ $1\leq i,j\leq 2$.

Let $\Psi(R_{k})x_{1}=a_{1}\cdot x_{1}+ a_{2}\cdot x_{2}$, where $\Psi$ is defined in the proof of Theorem 3.10,
Then \begin{equation}\label{l-invariant elements}
\begin{split}
&a_{1}=\Psi(R_{k})x_{1}(e_{1})=x_{1}(R_{k}^{-1}\cdot e_{1})=x_{1}(R_{k}^{T}\cdot e_{1})\\
&=x_{1}(\left(
          \begin{array}{c}
            cos\frac{2\pi k}{n} \\
            -sin\frac{2\pi k}{n} \\
          \end{array}
        \right))=cos\frac{2\pi k}{n}\\
&a_{2}=\Psi(R_{k})x_{1}(e_{2})=x_{1}(R_{k}^{-1}\cdot e_{2})=sin\frac{2\pi k}{n}.
\end{split}
\end{equation}
$\therefore \Psi(R_{k})x_{1}=cos\frac{2\pi k}{n}\cdot x_{1}+sin\frac{2\pi k}{n}\cdot x_{2}. $
Similarly,$\Psi(R_{k})x_{2}=-sin\frac{2\pi k}{n}\cdot x_{1}+cos\frac{2\pi k}{n}\cdot x_{2}. $
$\therefore \Psi(R_{k})\left(
                         \begin{array}{c}
                           x_{1} \\
                           x_{2} \\
                         \end{array}
                       \right)=R_{k}^{T}\cdot \left(
                         \begin{array}{c}
                           x_{1} \\
                           x_{2} \\
                         \end{array}
                       \right)$

Similarly,$\Psi(S_{k})\left(
                         \begin{array}{c}
                           x_{1} \\
                           x_{2} \\
                         \end{array}
                       \right)=S_{k}^{T}\cdot \left(
                         \begin{array}{c}
                           x_{1} \\
                           x_{2} \\
                         \end{array}
                       \right)$
$\forall 0\leq k\leq n-1$.
\begin{lemma}\label{twist}
For a finite group $G\subseteq GL(\mathbb{R}^{n})$, let $\mid G\mid=g$, then $C[x_{1},\cdots ,x_{n}]^{G}$ is generated
by the $\left(
                                                                          \begin{array}{c}
                                                                            g+n \\
                                                                            n \\
                                                                          \end{array}
                                                                        \right)$
polynomials \ $\frac{1}{g}\sum_{M\in G}M\cdot f$, as $f$ ranges over all $\left(
                                                                          \begin{array}{c}
                                                                            g+n \\
                                                                            n \\
                                                                          \end{array}
                                                                        \right)$
monomials in the variables $x_{1},\cdots ,x_{n}$ \ of degree at most $g$.
\end{lemma}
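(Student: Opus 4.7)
The plan is to prove this as a form of Noether's bound for invariant rings of finite groups in characteristic zero. First I would set up the Reynolds operator $\rho(f)=\tfrac{1}{g}\sum_{M\in G} M\cdot f$, note that it is a $\mathbb{C}$-linear projection of $\mathbb{C}[x_1,\dots,x_n]$ onto $\mathbb{C}[x_1,\dots,x_n]^G$, and deduce that every invariant is a $\mathbb{C}$-linear combination of averaged monomials $\rho(x^a)$ with $a\in\mathbb{N}^n$ (no a priori degree bound). A standard count (hockey-stick identity) shows that the number of monomials $x^{a}=x_1^{a_1}\cdots x_n^{a_n}$ with $a_1+\cdots+a_n\le g$ is exactly $\binom{g+n}{n}$, matching the stated number of generators; it therefore remains to prove that $\rho(x^a)$ for $|a|>g$ lies in the $\mathbb{C}$-subalgebra generated by $\{\rho(x^b):|b|\le g\}$.

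The key step is polarization. I would introduce auxiliary indeterminates $u_1,\dots,u_n$ and, for each $M\in G$, form the linear form $\ell_M(x,u)=\sum_i u_i (Mx)_i\in\mathbb{C}[x,u]$. The $g$ elements $\{\ell_M\}_{M\in G}$ are permuted by the action of $G$ on $x$, so the power sum $p_k=\sum_{M\in G}\ell_M^{\,k}$ is $G$-invariant in $x$. Expanding multinomially gives
\begin{equation*}
p_k \;=\; \sum_{|a|=k}\binom{k}{a}\,u^{a}\,\sigma(x^{a}),\qquad \sigma(f):=\sum_{M\in G}M\cdot f = g\,\rho(f),
\end{equation*}
so the coefficients of $p_k$ as a polynomial in $u$ recover (up to nonzero multinomial factors) exactly the averages $\rho(x^{a})$ with $|a|=k$.

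Since $p_k$ is the $k$-th power sum of only $g$ quantities, Newton's identities (valid in characteristic zero) express $p_k$ for every $k>g$ as a polynomial with rational coefficients in $p_1,\dots,p_g$. I would then equate coefficients of $u^{a}$ on both sides of the relation $p_k=F(p_1,\dots,p_g)$: the left side is a nonzero multiple of $\rho(x^{a})$, while the right side, expanded, is a polynomial expression in products $\prod \rho(x^{b_i})$ with $|b_i|\le g$. Iterating this identity (induction on $|a|$) writes every $\rho(x^a)$ with $|a|>g$ as a polynomial in the $\binom{g+n}{n}$ averages of monomials of degree $\le g$, finishing the proof.

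The hard part, I expect, is the coefficient-extraction bookkeeping in the final step: one must verify that when $F(p_1,\dots,p_g)$ is expanded and the coefficient of $u^{a}$ is read off, every monomial that appears is genuinely a product of $\rho(x^{b_i})$'s with each $|b_i|\le g$ and $\sum|b_i|=|a|$. This is a clean consequence of the fact that each $p_j$ is $u$-homogeneous of degree $j$ (so only tuples $(j_i)$ with $\sum j_i m_i=|a|$ contribute), but it is the step requiring care to avoid implicitly invoking averages of monomials whose degree exceeds $g$.
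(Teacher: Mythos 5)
Your argument is correct: it is the classical Noether polarization/power-sum proof (average the monomials with the Reynolds operator, package the averages $\sigma(x^a)$ with $|a|=k$ as the $u$-coefficients of the power sum $p_k=\sum_{M\in G}\ell_M^k$ of the $g$ linear forms $\ell_M$, and use Newton's identities in characteristic zero to reduce $p_k$ for $k>g$ to a polynomial in $p_1,\dots,p_g$, then extract coefficients using $u$-homogeneity). The paper itself gives no proof of this lemma; it only cites it as Noether's theorem via the reference [ZS, pp.\ 275--276], and the argument you give is essentially the one found there, so there is nothing to reconcile between your route and the paper's.
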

\begin{remark}
The above lemma is in fact showed by Nother(see[ZS],PP.275-276.]. Note that $D_{n}$ is a finite pseudo-reflection group. If we
combine above lemma, the above calculation and Lemma 4.3, we can calculate $\mathbb{C}[x_{1},x_{2}]^{D_{n}}$ explicitly. Let's look at the following
Proposition.
\end{remark}
\begin{prop}
For general $n\geq 3$, we obtain $\mathbb{C}[x_{1},x_{2}]^{D_{n}}=\mathbb{C}[z\cdot \overline{z},z^{n}+\overline{z}^{n}]$, where $z=x_{1}+i\cdot x_{2}$.
\end{prop}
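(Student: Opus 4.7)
The plan is to rewrite everything in the complex variables $z,\bar z$ and compute the invariants in two stages: first under the cyclic rotation subgroup $\langle R_1\rangle\cong\mathbb{Z}/n$ and then under the additional reflection $S_0$. Since $D_n=\langle R_1,S_0\rangle$, the $D_n$-invariants are exactly the $S_0$-invariants inside the $\langle R_1\rangle$-invariants.

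Using the action formulas $\Psi(R_k)$ and $\Psi(S_k)$ displayed immediately before the proposition, a direct substitution gives $\Psi(R_k)z=\omega^{-k}z$ and $\Psi(R_k)\bar z=\omega^{k}\bar z$, while $\Psi(S_k)z=\omega^{k}\bar z$ and $\Psi(S_k)\bar z=\omega^{-k}z$, where $\omega=e^{2\pi i/n}$. In particular $z\bar z$ and $z^n+\bar z^n$ are fixed by every generator (using $\omega^{n}=1$ for the latter), giving the easy inclusion $\mathbb{C}[z\bar z,\,z^n+\bar z^n]\subseteq\mathbb{C}[x_1,x_2]^{D_n}$.

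For the reverse inclusion I would first compute $\mathbb{C}[z,\bar z]^{\langle R_1\rangle}$: a monomial $z^a\bar z^b$ is $R_1$-fixed iff $a\equiv b\pmod n$, and every such monomial factors as $(z\bar z)^{\min(a,b)}$ times an integer power of either $z^n$ or $\bar z^n$. Hence $\mathbb{C}[z,\bar z]^{\langle R_1\rangle}=\mathbb{C}[z\bar z,\,z^n,\,\bar z^n]$, and these three generators are algebraically independent modulo the single relation $z^n\bar z^n=(z\bar z)^n$. Now impose invariance under $S_0$, which by the formulas above fixes $z\bar z$ and swaps $z^n$ with $\bar z^n$. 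So $\mathbb{C}[x_1,x_2]^{D_n}$ consists of the elements of $\mathbb{C}[z\bar z][z^n,\bar z^n]$ that are symmetric in the pair $(z^n,\bar z^n)$, and by the fundamental theorem of symmetric functions these are generated over $\mathbb{C}[z\bar z]$ by the elementary symmetric functions $z^n+\bar z^n$ and $z^n\bar z^n$. The second equals $(z\bar z)^n\in\mathbb{C}[z\bar z]$ and is therefore redundant, so the ring collapses to $\mathbb{C}[z\bar z,\,z^n+\bar z^n]$.

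The main obstacle I anticipate is the mild bookkeeping in the final symmetrization: one must keep the relation $z^n\bar z^n=(z\bar z)^n$ in mind so as not to overcount generators, and one must check that every symmetric polynomial in $z^n,\bar z^n$ with coefficients in $\mathbb{C}[z\bar z]$ really does lift unambiguously from $\mathbb{C}[z\bar z,z^n,\bar z^n]$. As an independent sanity check, the candidate degrees $2$ and $n$ satisfy $2\cdot n=2n=|D_n|$, matching the Shephard--Todd--Chevalley degree identity of Lemma~4.5; combined with Lemma~4.3 this forces the inclusion $\mathbb{C}[z\bar z,\,z^n+\bar z^n]\subseteq\mathbb{C}[x_1,x_2]^{D_n}$ to be an equality by Hilbert series comparison, confirming the result and simultaneously giving the algebraic independence of the two generators for free.
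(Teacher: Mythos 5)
Your argument is correct, but it reaches the result by a genuinely different route from the paper. The paper proves the inclusion $\mathbb{C}[z\bar z,\,z^n+\bar z^n]\subseteq\mathbb{C}[x_1,x_2]^{D_n}$ by applying Noether's averaging operator (Lemma 5.4) to $x_1^2$ and $x_1^n$ and evaluating the resulting trigonometric and binomial sums explicitly, and then concludes equality by citing Chevalley's theorem (Lemma 4.3) together with the algebraic independence of the two candidates. You instead diagonalize the action in the coordinates $z,\bar z$, which makes the easy inclusion a one-line check, and then you obtain the reverse inclusion \emph{directly}: first $\mathbb{C}[z,\bar z]^{\langle R_1\rangle}=\mathbb{C}[z\bar z,z^n,\bar z^n]$ by the congruence condition $a\equiv b\pmod n$ on monomials, then symmetrization under $S_0$ collapses this to $\mathbb{C}[z\bar z,\,z^n+\bar z^n]$ since $z^n\bar z^n=(z\bar z)^n$. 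This buys you two things: the computation is shorter than the paper's binomial expansion, and your argument is logically self-contained, whereas the paper's final step is actually incomplete as stated --- two algebraically independent homogeneous invariants need not generate the invariant ring (e.g.\ $(z\bar z)^2$ and $(z^n+\bar z^n)^2$ are algebraically independent invariants that do not generate), so one must additionally invoke the degree identity $2\cdot n=|D_n|$ of Lemma 4.5, exactly the check you perform. One caution on your closing ``sanity check'': the Hilbert-series comparison \emph{presupposes} algebraic independence (otherwise the Poincar\'e series of $\mathbb{C}[z\bar z,\,z^n+\bar z^n]$ need not be $(1-t^2)^{-1}(1-t^n)^{-1}$), so independence is not obtained ``for free'' there; but it is immediate from the nonvanishing Jacobian determinant
\begin{equation*}
\det\begin{pmatrix}\bar z & z\\ nz^{n-1} & n\bar z^{n-1}\end{pmatrix}=n(\bar z^{\,n}-z^{n})\neq 0,
\end{equation*}
or a posteriori from your direct computation together with the Krull dimension of the invariant ring.
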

\begin{proof}
We consider $f_{1}=x_{1}^{2}$ and $f_{2}=x_{1}^{n}$. According to Lemma 5.4, $\sum_{M\in D_{n}}M\cdot f_{1}$,$\sum_{M\in D_{n}}M\cdot f_{2}\in \mathbb{C}[x_{1},x_{2}]^{D_{n}}$.

Note that for any $m\in N$, the following equations hold
\begin{equation}\label{l-invariant elements}
\begin{split}
&\sum_{k=0}^{n-1}(cos\frac{2\pi k}{n}\cdot x_{1}+sin\frac{2\pi k}{n}\cdot x_{2})^{m}=\sum_{k=1}^{n}(cos\frac{2\pi k}{n}\cdot x_{1}+sin\frac{2\pi k}{n}\cdot x_{2})^{m}\\
&=\sum_{k=1}^{n}(cos\frac{2\pi(n-k)}{n}\cdot x_{1}+sin\frac{2\pi(n-k)}{n}\cdot x_{2})^{m}=\sum_{k=1}^{n}(cos\frac{2\pi k}{n}\cdot x_{1}-sin\frac{2\pi k}{n}\cdot x_{2})^{m}\\
&=\sum_{k=0}^{n-1}(cos\frac{2\pi k}{n}\cdot x_{1}-sin\frac{2\pi k}{n}\cdot x_{2})^{m}.
\end{split}
\end{equation}
 Therefore, according to the above calculation, we obtain
 \begin{equation}\label{l-invariant elements}
\begin{split}
&\sum_{M\in D_{n}}M\cdot f_{1}=\sum_{k=0}^{n-1}(cos\frac{2\pi k}{n}\cdot x_{1}-sin\frac{2\pi k}{n}\cdot x_{2})^{2}+\sum_{k=0}^{n-1}(cos\frac{2\pi k}{n}\cdot x_{1}+sin\frac{2\pi k}{n}\cdot x_{2})^{2}\\
&=2\sum_{k=0}^{n-1}(cos^{2}\frac{2\pi k}{n}\cdot x_{1}^{2}+sin^{2}\frac{2\pi k}{n}\cdot x_{2}^{2})\\
&=\sum_{k=0}^{n-1}((1+cos\frac{4\pi k}{n})\cdot x_{1}^{2}+(1-cos\frac{4\pi k}{n})\cdot x_{2}^{2}).
\end{split}
\end{equation}
For $n\geq 3$, $\frac{4\pi}{n}\neq 2\pi k$ $\therefore \sum_{k=0}^{n-1}e^{i\cdot \frac{4\pi k}{n}}=0$, which implies $\sum_{k=0}^{n-1}cos\frac{4\pi k}{n}=0$
$\therefore \sum_{M\in D_{n}}M\cdot f_{1}=n\cdot (x_{1}^{2}+x_{2}^{2})$
\begin{equation}\label{l-invariant elements}
\therefore z\cdot \overline{z}=(x_{1}+i\cdot x_{2})(x_{1}-i\cdot x_{2})=x_{1}^{2}+x_{2}^{2}\in \mathbb{C}[x_{1},x_{2}]^{D_{n}}
\end{equation}

Next, we prove $z^{n}+\overline{z}^{n}\in \mathbb{C}[x_{1},x_{2}]^{D_{n}}$. The general idea is as follows:

Firstly, we compute when $n=2m+1$, $z^{n}+\overline{z}^{n}\in \mathbb{C}[x_{1},x_{2}]^{D_{n}}$. What's more, using the same method, we can compute when $n=2m,z^{2m}+\mid z\mid^{2m}+\overline{z}^{2m}\in \mathbb{C}[x_{1},x_{2}]^{D_{n}}$.
Since $\mid z\mid^{2m}=(x_{1}^{2}+x_{2}^{2})^{m}$, $z^{2m}+\overline{z}^{2m}\in \mathbb{C}[x_{1},x_{2}]^{D_{n}}$.

Now, we give the concrete proof as follows:

When $n=2m+1$, let $z=ae^{i\theta}=x_{1}+i\cdot x_{2}$. Therefore, $\mid z\mid=\sqrt{x_{1}^{2}+x_{2}^{2}}$.
\begin{equation}\label{l-invariant elements}
\begin{split}
&\sum_{M\in D_{n}}M\cdot f_{2}\\
&=\sum_{k=0}^{n-1}((cos^{2}\frac{2\pi k}{n}\cdot x_{1}-sin^{2}\frac{2\pi k}{n}\cdot x_{2})^{2m+1}+(cos^{2}\frac{2\pi k}{n}\cdot x_{1}+sin^{2}\frac{2\pi k}{n}\cdot x_{2})^{2m+1})\\
&=\sum_{k=0}^{n-1}((\frac{e^{i\cdot \frac{2\pi k}{n}z}+e^{-i\cdot \frac{2\pi k}{n}\overline{z}}}{2})^{2m+1}+(\frac{e^{-i\cdot \frac{2\pi k}{n}z}+e^{i\cdot \frac{2\pi k}{n}\overline{z}}}{2})^{2m+1})\\
&=\sum_{k=0}^{n-1}(( \frac{a\cdot e^{i\cdot (\theta+\frac{2\pi k}{n})}+a\cdot e^{-i\cdot (\theta+\frac{2\pi k}{n})}}{2})^{2m+1}+(\frac{a\cdot e^{i\cdot (\theta-\frac{2\pi k}{n})}+a\cdot e^{-i\cdot (\theta-\frac{2\pi k}{n})}}{2})^{2m+1})\\
&=\sum_{k=0}^{n-1}(\sum_{l=0}^{2m+1}\frac{C_{2m+1}^{l}a^{2m+1-l}e^{i(\theta+\frac{2\pi k}{n})(2m+1-l)}a^{l}e^{-i(\theta+\frac{2\pi k}{n})}}{2^{2m+1}}\\
&+\sum_{l=0}^{2m+1}\frac{C_{2m+1}^{l}a^{2m+1-l}e^{i(\theta-\frac{2\pi k}{n})(2m+1-l)}a^{l}e^{-i(\theta-\frac{2\pi k}{n})}}{2^{2m+1}})\\
&=\sum_{k=0}^{n-1}(\sum_{l=0}^{2m+1}\frac{C_{2m+1}^{l}a^{2m+1}e^{i\theta(2m+1-2l)}e^{i\frac{2m+1-2l}{n}2\pi k}}{2^{2m+1}}\\
&+\sum_{l=0}^{2m+1}\frac{C_{2m+1}^{l}a^{2m+1}e^{i\theta(2m+1-2l)}e^{-i\frac{2m+1-2l}{n}2\pi k}}{2^{2m+1}}).
\end{split}
\end{equation}
Note that
\begin{equation}\label{l-invariant elements}
e^{i\frac{2m+1-2l}{n}2\pi}=1\Longleftrightarrow l=2m+1, 0\Longleftrightarrow e^{-i\frac{2m+1-2l}{n}2\pi}=1
\end{equation}
Therefore,
\begin{equation}\label{l-invariant elements}
\begin{split}
&\sum_{M\in D_{n}}M\cdot f_{2}=\frac{1}{2^{2m+1}}(C_{2m+1}^{0}a^{2m+1}e^{i\theta (2m+1)}+C_{2m+1}^{0}a^{2m+1}e^{i\theta (2m+1)}\\
&+C_{2m+1}^{2m+1}a^{2m+1}e^{-i\theta (2m+1)}+C_{2m+1}^{2m+1}a^{2m+1}e^{-i\theta (2m+1)})\\
&=\frac{2}{2^{2m+1}}((ae^{i\theta})^{2m+1}+(ae^{-i\theta})^{2m+1})\\
&=\frac{1}{2^{2m}}(z^{2m+1}+\overline{z}^{2m+1}).
\end{split}
\end{equation}
Therefore, when $n=2m+1,z^{n}+\overline{z}^{n}\in \mathbb{C}[x_{1},x_{2}]^{D_{n}}$.

Similarly, we can show when $n=2m,z^{n}+\overline{z}^{n}\in \mathbb{C}[x_{1},x_{2}]^{D_{n}}$.

Therefore, $z\cdot \overline{z}$ and $z^{n}+\overline{z}^{n} \in \mathbb{C}[x_{1},x_{2}]^{D_{n}}$ for $\forall n\geq 3$. Since
$z\cdot \overline{z}$ and $z^{n}+\overline{z}^{n}$ are algebraiclly independent, according to Lemma 4.3, we
obtain $\mathbb{C}[x_{1},x_{2}]^{D_{n}}=\mathbb{C}[z\cdot \overline{z},z^{n}+\overline{z}^{n}]$.
\end{proof}
\begin{remark}
According to Lemma 2.3, for every $E_{\lambda}$,$\exists \left(
                                                          \begin{array}{c}
                                                            \mu_{1}^{\lambda} \\
                                                            \mu_{2}^{\lambda}\\
                                                          \end{array}
                                                        \right)\in \mathbb{C}^{2}$ \ s.t.
\begin{equation}\label{l-invariant elements}
E_{\lambda}=\{f\in \varepsilon(R^{2})\mid \frac{\partial}{\partial z}\frac{\partial}{\partial \overline{z}}f=\mu_{1}^{\lambda}f \ \
((\frac{\partial}{\partial z})^{n}+(\frac{\partial}{\partial \overline{z}})^{n})f=\mu_{2}^{\lambda}f\}
\end{equation}
\end{remark}
\begin{remark}
Let notation as above,
\begin{equation}\label{l-invariant elements}
\begin{split}
&H=\{f\in \varepsilon(R^{2})\mid \frac{\partial}{\partial z}\frac{\partial}{\partial \overline{z}}f=0 \ \
((\frac{\partial}{\partial z})^{n}+(\frac{\partial}{\partial \overline{z}})^{n})f=0 \}\\
&=\mathbb{C}\bigoplus \mathbb{C}Z\bigoplus \mathbb{C}\overline{Z}\bigoplus \mathbb{C}Z^{2}\bigoplus \mathbb{C}\overline{Z}^{2}\bigoplus\cdots \bigoplus \mathbb{C}Z^{n-1}\bigoplus \mathbb{C}\overline{Z}^{n-1}\bigoplus \mathbb{C}(Z^{n}-\overline{Z}^{n}).
\end{split}
\end{equation}
Therefore, dim$H$=$2n$=$\mid D_{n}\mid$.
\end{remark}
\begin{prop}
Let $\chi(x)=e^{-\lambda^{T}\cdot x}$, where $\lambda=\left(
                                                        \begin{array}{c}
                                                          \lambda_{1} \\
                                                          \lambda_{2} \\
                                                        \end{array}
                                                      \right)$
be generic. Then $Ind_{\mathbb{R}^{2}}^{\mathbb{R}^{2}\rtimes D_{n}}(\chi)$ is
equivalent to $(T_{\lambda},E_{\lambda})$, where $E_{\lambda}=\{f\in \varepsilon(\mathbb{R}^{2})\mid \frac{\partial}{\partial z}\frac{\partial}{\partial \overline{z}}f=\frac{1}{4}(\lambda_{1}^{2}+
\lambda_{2}^{2})f \ \ ((\frac{\partial}{\partial z})^{n}+(\frac{\partial}{\partial \overline{z}})^{n})f=\frac{1}{2^{n}}((\lambda_{1}-i\lambda_{2})^{n}+(\lambda_{1}+i\lambda_{2})^{n})f\}$
Furthermore, $(T_{\lambda},E_{\lambda})$ is irreducible and all eigenspace representations of $\mathbb{R}^{2}\rtimes D_{n}$, which satisfy the generic property is of this form.
\end{prop}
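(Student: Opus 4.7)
The plan is to specialize the main Theorem 4.12 to the case $K = D_n$, using Proposition 5.5 to make the generators of $\mathbb{C}[x_1,x_2]^{D_n}$ explicit, and then compute the corresponding eigenvalues. First I would verify that $D_n$ qualifies as a finite pseudo-reflection group in the sense of Definition 1.1: the elements $S_k$ are reflections (eigenvalues $1, -1$) and generate $D_n$, so the hypotheses of Theorem 4.12 apply. Once this is in place, the proposition reduces to a concrete identification of the eigenspace $E_\lambda$ and an invocation of the general equivalence theorem.

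Next, I would use Lemma 2.3 together with Lemma 2.4 and Proposition 5.5 to observe that $D(G/D_n)$ is, under Fourier transform, the polynomial algebra $\mathbb{C}[z\bar{z},\, z^n + \bar{z}^n]$ in the two variables $x_1, x_2$; consequently a character $\chi_\lambda$ of $D(G/D_n)$ is determined by its values on the two generators, and $E_\lambda$ coincides with the joint eigenspace of the two operators $\partial_z \partial_{\bar z}$ and $\partial_z^n + \partial_{\bar z}^n$ appearing in the statement. This identification of $E_\lambda$ is essentially Remark 5.6.

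To fix the eigenvalues, I would translate through the Fourier isomorphism using the same sign convention used in formula (3.14): the operator $\partial_{x_j}$ acting on $e^{\lambda^T \cdot k^T x}$ produces the scalar $(k\lambda)_j$, so under the Fourier correspondence $\partial_{x_j} \leftrightarrow \lambda_j$. Since $\partial_z = \tfrac{1}{2}(\partial_{x_1} - i \partial_{x_2})$ and $\partial_{\bar z} = \tfrac{1}{2}(\partial_{x_1} + i \partial_{x_2})$, the operator $\partial_z \partial_{\bar z}$ corresponds to
\begin{equation*}
\tfrac{1}{4}(\lambda_1 - i\lambda_2)(\lambda_1 + i\lambda_2) = \tfrac{1}{4}(\lambda_1^2 + \lambda_2^2),
\end{equation*}
and $\partial_z^n + \partial_{\bar z}^n$ corresponds to $\tfrac{1}{2^n}\bigl((\lambda_1 - i\lambda_2)^n + (\lambda_1 + i\lambda_2)^n\bigr)$. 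These are exactly the eigenvalues stated in the proposition.

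Finally, with $\lambda$ generic by hypothesis, Theorem 4.12 immediately gives both the equivalence $\mathrm{Ind}_{\mathbb{R}^2}^{\mathbb{R}^2 \rtimes D_n}(\chi) \cong (T_\lambda, E_\lambda)$ and the irreducibility of $(T_\lambda, E_\lambda)$. The last assertion, that every generic eigenspace representation of $\mathbb{R}^2 \rtimes D_n$ has this form, follows at once from the bijective correspondence between the sets $A$ and $B$ recorded in Remark 3.15 and restated in Theorem 4.12. I expect the only genuine source of difficulty to be bookkeeping: getting the $\tfrac{1}{2}$ factors and the sign convention in the Fourier correspondence consistent with (3.14) so that the stated constants emerge exactly; the representation-theoretic content is already packaged in Theorem 4.12 and Proposition 5.5.
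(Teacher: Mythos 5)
Your proposal is correct and follows essentially the same route as the paper: both reduce the statement to the main theorem of Section 4 for the equivalence and irreducibility, use Proposition 5.5 to identify the generators of $\mathbb{C}[x_1,x_2]^{D_n}$, and then compute the eigenvalues by writing $\frac{\partial}{\partial z}\frac{\partial}{\partial \overline{z}}$ and $(\frac{\partial}{\partial z})^{n}+(\frac{\partial}{\partial \overline{z}})^{n}$ in terms of $\frac{\partial}{\partial x_{1}},\frac{\partial}{\partial x_{2}}$ and evaluating at $\lambda$, with the final "all generic eigenspace representations are of this form" assertion drawn from Remark 3.15 exactly as in the paper. The only addition is your explicit check that $D_n$ is a finite pseudo-reflection group, which the paper relegates to a remark.
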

\begin{proof}
According to Theorem 4.14, we obtain $Ind_{\mathbb{R}^{2}}^{\mathbb{R}^{2}\rtimes D_{n}}(\chi)$ is equivalent to $(T_{\lambda},E_{\lambda})$. Therefore $(T_{\lambda},E_{\lambda})$
is irreducible. Note that $\frac{\partial}{\partial z}=\frac{1}{2}\frac{\partial}{\partial x_{1}}-\frac{i}{2}\frac{\partial}{\partial x_{2}}$,
$\frac{\partial}{\partial \overline{z}}=\frac{1}{2}\frac{\partial}{\partial x_{1}}+\frac{i}{2}\frac{\partial}{\partial x_{2}}$.

Therefore, we have
\begin{equation}\label{l-invariant elements}
\begin{split}
&\frac{\partial}{\partial z}\frac{\partial}{\partial \overline{z}}=P_{1}(\frac{\partial}{\partial x_{1}},\frac{\partial}{\partial x_{2}})\\
&=(\frac{1}{2}\frac{\partial}{\partial x_{1}}-\frac{i}{2}\frac{\partial}{\partial x_{2}})(\frac{1}{2}\frac{\partial}{\partial x_{1}}+\frac{i}{2}\frac{\partial}{\partial x_{2}})\\
&=\frac{1}{4}((\frac{\partial}{\partial x_{1}})^{2}+(\frac{\partial}{\partial x_{2}})^{2})\\
&(\frac{\partial}{\partial z})^{n}+(\frac{\partial}{\partial \overline{z}})^{n}=P_{2}(\frac{\partial}{\partial x_{1}},\frac{\partial}{\partial x_{2}})\\
&=\frac{1}{2^{n}}((\frac{\partial}{\partial x_{1}}-i\frac{\partial}{\partial x_{2}})^{n}+(\frac{\partial}{\partial x_{1}}+i\frac{\partial}{\partial x_{2}})^{n}).
\end{split}
\end{equation}

Then we obtain
\begin{equation}\label{l-invariant elements}
\begin{split}
&E_{\lambda}=\{f\in \varepsilon(\mathbb{R}^{2})\mid P(\frac{\partial}{\partial x_{1}},\frac{\partial}{\partial x_{2}})f=
P(\lambda_{1},\lambda_{2})f, \ \forall P(\frac{\partial}{\partial x_{1}},\frac{\partial}{\partial x_{2}})\in D(\mathbb{R}^{2}\rtimes D_{n}/D_{n})\}\\
&=\{f\in \varepsilon(\mathbb{R}^{2})\mid P_{1}(\frac{\partial}{\partial x_{1}},\frac{\partial}{\partial x_{2}})f=
P_{1}(\lambda_{1},\lambda_{2})f,P_{2}(\frac{\partial}{\partial x_{1}},\frac{\partial}{\partial x_{2}})f=P_{2}(\lambda_{1},\lambda_{2})f\}\\
&=\{f\in \varepsilon(\mathbb{R}^{2})\mid \frac{\partial}{\partial z}\frac{\partial}{\partial \overline{z}}f=\frac{1}{4}(\lambda_{1}^{2}+
\lambda_{2}^{2})f \ \ ((\frac{\partial}{\partial z})^{n}+(\frac{\partial}{\partial \overline{z}})^{n})f=\frac{1}{2^{n}}((\lambda_{1}-i\lambda_{2})^{n}+(\lambda_{1}+i\lambda_{2})^{n})f\}.
\end{split}
\end{equation}
Finally, from the conclusion of Remark 3.15, we can get all eigenspace representations of $\mathbb{R}^{2}\rtimes D_{n}$, which satisfy the generic property is of this form.
\end{proof}
\begin{remark}
In this paper, we mainly consider the irreducibility of the eigenspace representations of $G=\mathbb{R}^{n}\rtimes K$, where $K$ is a finite pseudo-reflection group. In fact,
the same problem for the general case when $K$ is any closed subgroup of $O(n)$ is also of great value.
\end{remark}


\begin{thebibliography}{EHW}
\bibitem [H1]{H1} S. Helgason, \emph{A duality for symmetric spaces with applications to group representations,$II$.Differential equations and eigenspace representations},
Advances in Mathematics \textbf{22} (1976), 187--219.

\bibitem [H2]{H2} S. Helgason, \emph{Eigenspaces of the Laplacians;integral representations and irreducibility},  J. FunctionalMath Anal. \textbf{17} (1974),
328-353.

\bibitem [J]{J} Wolf, Joseph A. \emph{Spherical functions on Euclidean space
}, J. in Funct. Anal. (2006),No.1,127-136.

\bibitem [H3]{H3} S. Helgason, \emph{Goups and Geometric Analysis:Integral Geometry,Invariant Differential Operators,and Spherical Functions}, Academic Press. (1984).

\bibitem [S1]{S1} Robert Steinberg \emph{Differential equations invariant under finite reflection groups}, Transactions of the American Mathematical Society. Vol. 112, No. 3 (Sep., 1964), pp. 392-400

\bibitem [S2]{S2} Richard P. Stanley \emph{Invariants of finite groups and their applications to combinatorics}, Bull. Amer. Math. Soc. \textbf{1} (1979), 475--511.

\bibitem [ZS]{ZS} Zariski.O., Samuel,P, \emph{Commutative Algebra Vol.I.}, New York (1958).

\bibitem [W]{W} Weyl, \emph{The classical groups,2nd ed;}, Princeton.Univ.Press,Princeton,N.J., (1953).

\end{thebibliography}
\end{document}